\theoremstyle{definition}
\newtheorem{defn}{Definition}
\theoremstyle{plain}
\newtheorem{thm}{Theorem}
\newtheorem{cor}{Corollary}
\newtheorem{lemm}{Lemma}
\newcommand{\nats}{\mathbb{N}}
\newcommand{\reals}{\mathbb{R}}
\newcommand{\cfuncs}[1]{\mathscr{C}({#1})}
\newcommand{\boehms}[1]{\mathscr{B}({#1})}
\newcommand{\oboehms}[1]{\mathscr{B}_0({#1})}
\newcommand{\vare}{\varepsilon}
\newcommand{\rn}{\reals^N}
\newcommand{\toto}{\rightrightarrows}
\newcommand{\supp}{\mathrm{supp}}
\newcommand{\Knorm}[2]{\left\|{#1}\right\|_{#2}}
\newcommand{\Dto}{\overset{\Delta}\longrightarrow}
\newcommand{\dto}{\overset{\delta}\longrightarrow}
\renewcommand{\phi}{\varphi}
\begin{document}

\title{A Sheaf of Boehmians}
\author{Jonathan Beardsley\footnote{The first author is partially supported by NSF Grant DMS 0649159.}\\jon.s.beardsley@gmail.com\\Piotr Mikusi\'nski\\piotrm@mail.ucf.edu}
\maketitle{}

\textbf{Abstract:} We show that Boehmians defined over open sets of $\rn$ constitute a sheaf.  In particular, it is shown that such Boehmians satisfy the gluing property of sheaves over topological spaces.

\bigskip

{\bf MSC}: Primary 44A40, 46F99; Secondary 44A35, 18F20

{\bf Key words and phrases}: Boehmians, convolution, convolution quotients, sheaf.

\section{INTRODUCTION} \label{Ch1}

\indent The name {\it Boehmians} is used to describe a space of objects that are defined as equivalence classes of pairs of sequences $(x_n,\phi_n)$ where $x_n\in X$, some nonempty set, and $\phi_n\in G$, a commutative semigroup acting on $X$. If $X$ is equipped with a topology or sequential convergence, then it is usually assumed that $\phi_n x \to x$ as $n\to\infty$. In applications to generalized functions, $X$ is usually a space of functions and $G$ is a semigroup of ``test" functions acting on $X$ by convolution.

In the case of the basic space of Boehmians \cite{conv}, $X$ is the space of continuous functions on $\rn$ and $G$ is the convolution semigroup of smooth functions with compact support.  These Boehmians are basically global objects.  Boehmians on open subsets of $\rn$ were introduced in \cite{open}. The relationship between Boehmians on $\rn$ and Boehmians on open subsets of $\rn$ is not completely understood. Some related questions are discussed in \cite{flex}.

In this note we show that Boehmians on $\rn$ constitute a sheaf, a mathematical structure used to organize local information over open sets in a topological space. It is our hope that this will lead to a better understanding of the relationship between Boehmians on $\rn$ and Boehmians on open subsets of $\rn$. The structure of a sheaf should enable us to use new tools to attack some unsolved fundamental problems concerning Boehmians in $\rn$ discussed in \cite{flex}.
This work was inspired by discussions with Joseph Brennan.

In the next section we introduce some notation and definitions that the reader must be familiar with to define the Boehmians over $\rn$.  We continue with some lemmas and theorems that prove a gluing property for Boehmians over two open sets.  In the last section we extend this gluing property to the gluing property of sheaves, and show that the Boehmians do in fact constitute a sheaf.

\section{PRELIMINARIES} \label{Ch2}

If $K$ is a compact subset of an open set $U\subseteq\rn$, we write $K\Subset U$. We denote by $B_\vare(x)$ the ball of radius $\vare$ centered at $x$ and by $B_\vare$ the ball centered at the origin. For $U\subseteq\rn$, we denote
$$
U^{+\vare} =U+B_\vare=\bigcup_{x\in U} B_\vare(x)\quad \text{and} \quad U^{-\vare}=\left(\overline{U^\complement+B_\vare}\right)^\complement,
$$
where $A^\complement$ denotes the complement of $A$.

It is possible that $U^{-\vare}$ is empty even if $U$ is not empty. While this does not create any real problems, we will always implicitly assume that $\vare$ is such that $U^{-\vare}$ is not empty.

  We denote by $\cfuncs{U}$ the space of all continuous functions on some $U\subseteq\reals^N$ and define $\supp(f)=\overline{\{x\in\rn :f(x)\neq 0\}}$, where $\overline{S}$ denotes the closure of the set $S\subseteq\reals^N$. $\mathscr{D}(\rn)$, or simply $\mathscr{D}$, denotes the space of test functions, that is, smooth functions with compact support.  By $\mathscr{D}_o(\rn)$, or simply $\mathscr{D}_o$, we mean the collection of all $\varphi\in\mathscr{D}(\rn)$ such that $\varphi\geq 0$ and $\int\varphi=1$. For $\phi \in \mathscr{D}$ we define
$$
s(\phi)=\inf\{\vare>0:\supp(\phi)\subset B_\vare\}.
$$
A sequence $\phi_1, \phi_2, \dots \in \mathscr{D}_o$ is called a delta sequence if $s(\phi_n)\to 0$ as $n\to\infty$.  We denote the set of all delta sequences by $\Delta$.

For any $A,B\subseteq \rn$ we denote $d(A,B)=\inf\{\|x-y\|:x\in A, y\in B\}$, where $\|x-y\|$ is the Euclidean distance between $x$ and $y$.

If $f$ is a function defined on $U\subseteq\rn$, we will define $\Knorm{f}{U}$ to be the essential supremum of $f$ on $U$, or $\Knorm{f\vert_U}{\infty}$. We will also use the $L^1$-norm, denoted by $\Knorm{f}{1}=\int_{\rn} |f|$.

The convolution of $f\in \mathscr{C}(\rn)$ with a test function $\varphi\in \mathscr{D}(\rn)$ will be indicated by $f\ast\varphi$ and defined as
$$
(f\ast\varphi)(x) =\int_{\rn} f(y)\varphi(x-y)dy=\int_{\rn} f(x-y)\varphi(y)dy.
$$
In the case $f$ is a function defined on an open subset $U\subseteq\rn$, we adopt the convention that $f(s)\varphi(s)=0$ whenever $\varphi(s)=0$, even if $f$ is not defined at $s$. Note that, if $f\in\cfuncs{U}$ and $s(\varphi)=\vare$, then $f\ast\varphi$ is defined on $U^{-\vare}$.

If a sequence of functions $f_n$ is convergent to $f$ uniformly on a set $S$, we will write $f_n\toto f$ on $S$.

\begin{lemm}\label{normsplit}
If $K\Subset\rn$, $f\in\cfuncs{K}$, $\varphi\in\mathscr{D}$, and $s(\varphi)<\vare$, then
$$
\Knorm{f\ast\varphi}{K^{-\vare}}\leq\Knorm{f}{K}\|\varphi \|_1.
$$
\end{lemm}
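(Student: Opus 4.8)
The plan is to reduce this sup-norm inequality to a pointwise estimate on $f\ast\varphi$ and then take the supremum. First I would fix a point $x\in K^{-\vare}$ and write the convolution in the form
$$
(f\ast\varphi)(x)=\int_{\rn}f(x-y)\varphi(y)\,dy,
$$
observing that, by the convention adopted for functions defined only on a subset of $\rn$, the integrand vanishes wherever $\varphi(y)=0$, so that the integration is effectively carried out over $\supp(\varphi)$.

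The key step is geometric: I would show that for such $x$ and for every $y\in\supp(\varphi)$ the point $x-y$ lies in $K$, so that $f(x-y)$ is genuinely defined and controlled by $\Knorm{f}{K}$. To see this I would unwind the definition $K^{-\vare}=(\overline{K^\complement+B_\vare})^\complement$: writing $g(z)=d(\{z\},K^\complement)$, one has $K^\complement+B_\vare=\{z:g(z)<\vare\}$, hence $x\in K^{-\vare}$ forces $g(x)\geq\vare$. On the other hand, $\supp(\varphi)\subseteq\overline{B_{s(\varphi)}}$, so every $y\in\supp(\varphi)$ satisfies $\|y\|\leq s(\varphi)$. The strict hypothesis $s(\varphi)<\vare$ then gives $\|x-(x-y)\|=\|y\|\leq s(\varphi)<\vare\leq g(x)$, which rules out $x-y\in K^\complement$ and places $x-y$ in $K$.

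With this containment in hand, the estimate is routine. Using the continuity of $f$ on the compact set $K$ to identify the essential supremum with the ordinary supremum, I would bound
$$
|(f\ast\varphi)(x)|\leq\int_{\supp(\varphi)}|f(x-y)|\,|\varphi(y)|\,dy\leq\Knorm{f}{K}\int_{\rn}|\varphi(y)|\,dy=\Knorm{f}{K}\,\Knorm{\varphi}{1}.
$$
Since the right-hand side is independent of $x$, taking the supremum over $x\in K^{-\vare}$ yields the claimed inequality.

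I expect the only real obstacle to be the geometric containment in the second step, namely verifying that the strict inequality $s(\varphi)<\vare$ is exactly what forces $x-\supp(\varphi)\subseteq K$. This hinges on correctly reading the closure in the definition of $K^{-\vare}$, which delivers only the non-strict bound $d(\{x\},K^\complement)\geq\vare$, against the non-strict bound $\|y\|\leq s(\varphi)$ on $\supp(\varphi)$; the strictness $s(\varphi)<\vare$ is what converts these into the strict inequality $\|y\|<d(\{x\},K^\complement)$ needed to exclude $x-y\in K^\complement$. Once the inclusion $x-y\in K$ is secured for all relevant $y$, the remaining steps are a direct application of the triangle inequality for integrals.
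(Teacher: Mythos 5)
Your proof is correct. The paper states Lemma \ref{normsplit} without proof, and your argument --- checking via the definition of $K^{-\vare}$ that $x-y\in K$ for all $x\in K^{-\vare}$ and $y\in\supp(\varphi)$ (using $\|y\|\leq s(\varphi)<\vare\leq d(x,K^\complement)$), then applying the triangle inequality for integrals --- is exactly the standard estimate the authors evidently intended.
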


\begin{cor}\label{cc}
Let $\phi \in \mathscr{D}_o$ and $s(\varphi)<\vare$ for some $\vare >0$. If a sequence of continuous functions $f_n \toto 0$ on $K\Subset\rn$, then $f_n\ast\varphi \toto 0$ on $K^{-\vare}$.
\end{cor}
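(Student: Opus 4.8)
The plan is to obtain the conclusion as an immediate consequence of Lemma \ref{normsplit}, applied term by term to the sequence $f_n$. First I would translate the hypothesis of uniform convergence into a statement about the sup-norm: since each $f_n$ is continuous and $K$ is compact, the assertion $f_n\toto 0$ on $K$ is equivalent to $\Knorm{f_n}{K}\to 0$ as $n\to\infty$. In other words, the uniform convergence of $f_n$ to $0$ on $K$ is exactly the statement that the sequence of numbers $\Knorm{f_n}{K}$ tends to $0$.

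Next I would record the relevant facts about $\varphi$. Because $\varphi\in\mathscr{D}_o$, we have $\varphi\geq 0$ and $\int\varphi=1$, so $\|\varphi\|_1=\int|\varphi|=\int\varphi=1$. Moreover, the hypothesis $s(\varphi)<\vare$ is precisely the condition required to invoke Lemma \ref{normsplit}, and it also guarantees, by the convention established in the preliminaries, that each convolution $f_n\ast\varphi$ is defined on $K^{-\vare}$.

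With these observations in place, I would apply Lemma \ref{normsplit} to each $f_n\in\cfuncs{K}$ to obtain, for every $n$,
$$
\Knorm{f_n\ast\varphi}{K^{-\vare}}\leq \Knorm{f_n}{K}\,\|\varphi\|_1=\Knorm{f_n}{K}.
$$
Letting $n\to\infty$, the right-hand side tends to $0$ by the first step, hence $\Knorm{f_n\ast\varphi}{K^{-\vare}}\to 0$, which is exactly the assertion $f_n\ast\varphi\toto 0$ on $K^{-\vare}$.

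I do not anticipate any real obstacle here: the entire content is carried by Lemma \ref{normsplit}, and the corollary amounts to specializing $f$ to the members of the sequence, using $\|\varphi\|_1=1$, and passing to the limit. The only points deserving a moment's care are the equivalence between uniform convergence on the compact set $K$ and convergence of the corresponding sup-norms, and the verification that membership in $\mathscr{D}_o$ forces $\|\varphi\|_1=1$.
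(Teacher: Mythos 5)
Your proposal is correct and is exactly the argument the paper intends: the corollary is stated as an immediate consequence of Lemma \ref{normsplit}, obtained by applying that estimate to each $f_n$ and using $\|\varphi\|_1=1$ for $\varphi\in\mathscr{D}_o$. Nothing further is needed.
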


\begin{thm}\label{infiniteconv}
If $(\phi_n)\in\Delta$ and $\sum_{n=1}^{\infty}s(\phi_n)<\infty$, then the sequence
$$
{\psi_n=\phi_1\ast\phi_2\ast\ldots\ast\phi_n}
$$
 is convergent uniformly on $\reals^N$ to a function $\psi\in\mathscr{D}$ such that $s(\psi)\leq\sum_{n=1}^{\infty}s(\phi_n)$.  The limit of the sequence $\psi_n$ will be denoted by $\phi_1\ast\phi_2\ast\ldots$.
\end{thm}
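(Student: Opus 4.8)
The plan is to realize $\psi$ as the sum of the telescoping series $\psi_1+\sum_{n\ge 1}(\psi_{n+1}-\psi_n)$ and to show that this series, together with all of its derivatives, converges uniformly on $\rn$. First I would record the elementary closure facts. Since $\mathscr{D}_o$ is closed under convolution (the convolution of two nonnegative test functions is again a nonnegative test function whose integral is the product of the integrals, hence $1$), each partial convolution $\psi_n$ lies in $\mathscr{D}_o$; in particular $\Knorm{\phi_2\ast\cdots\ast\phi_n}{1}=1$. Moreover $\supp(\phi_1\ast\cdots\ast\phi_n)\subseteq\supp\phi_1+\cdots+\supp\phi_n\subseteq\overline{B_{S_n}}$, where $S_n=\sum_{k=1}^n s(\phi_k)$, so every $\psi_n$ is supported in the single closed ball $\overline{B_S}$ with $S=\sum_{k=1}^\infty s(\phi_k)<\infty$. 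Granting the uniform convergence $\psi_n\toto\psi$, the support assertion is then immediate: if $\|x\|>S$ then $\psi_n(x)=0$ for every $n$, hence $\psi(x)=0$, whence $s(\psi)\le S$.

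The heart of the argument is a summable bound on the increments. Using $\int\phi_{n+1}=1$ to subtract off $\psi_n(x)=\int\psi_n(x)\phi_{n+1}(y)\,dy$, I would write
$$
\psi_{n+1}(x)-\psi_n(x)=\int_{\rn}\bigl[\psi_n(x-y)-\psi_n(x)\bigr]\phi_{n+1}(y)\,dy.
$$
Since $\phi_{n+1}$ is supported in $\|y\|\le s(\phi_{n+1})$ and $\int\phi_{n+1}=1$, this gives $\Knorm{\psi_{n+1}-\psi_n}{\rn}\le L_n\,s(\phi_{n+1})$, where $L_n$ is a Lipschitz constant for $\psi_n$. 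The key observation is that these Lipschitz constants are bounded uniformly in $n$: differentiating the convolution onto the first factor yields $\nabla\psi_n=(\nabla\phi_1)\ast\phi_2\ast\cdots\ast\phi_n$, and then Young's inequality $\Knorm{f\ast g}{\rn}\le\Knorm{f}{\rn}\Knorm{g}{1}$ (the global form of Lemma \ref{normsplit}) together with $\Knorm{\phi_2\ast\cdots\ast\phi_n}{1}=1$ gives $\Knorm{\nabla\psi_n}{\rn}\le\Knorm{\nabla\phi_1}{\rn}$ for all $n$. Hence $\Knorm{\psi_{n+1}-\psi_n}{\rn}\le\Knorm{\nabla\phi_1}{\rn}\,s(\phi_{n+1})$, and since $\sum_n s(\phi_{n+1})<\infty$ the telescoping series converges absolutely and uniformly, so its sum $\psi$ is continuous.

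To obtain $\psi\in\mathscr{D}$ rather than merely a continuous function, I would run exactly the same estimate on every derivative. For any multi-index $\alpha$ we have $\partial^\alpha\psi_n=(\partial^\alpha\phi_1)\ast\phi_2\ast\cdots\ast\phi_n$, and the computation above with $\partial^\alpha\phi_1$ in place of $\phi_1$ gives $\Knorm{\partial^\alpha\psi_{n+1}-\partial^\alpha\psi_n}{\rn}\le\Knorm{\nabla\partial^\alpha\phi_1}{\rn}\,s(\phi_{n+1})$, so that $(\partial^\alpha\psi_n)$ is uniformly Cauchy and converges uniformly. Because $\psi_n\toto\psi$ and each derivative sequence converges uniformly, the standard theorem on termwise differentiation of uniformly convergent sequences shows that $\psi$ is smooth with $\partial^\alpha\psi=\lim_n\partial^\alpha\psi_n$; being also compactly supported, $\psi\in\mathscr{D}$.

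I expect the main obstacle to be precisely the uniform control of the Lipschitz constants of the $\psi_n$ (and, for smoothness, of all their higher derivatives): without it the bound $\Knorm{\psi_{n+1}-\psi_n}{\rn}\le L_n\,s(\phi_{n+1})$ is useless, since a priori $L_n$ could grow with $n$. The resolution is the identity $\partial^\alpha(\phi_1\ast\cdots\ast\phi_n)=(\partial^\alpha\phi_1)\ast\phi_2\ast\cdots\ast\phi_n$, which transfers every derivative onto the fixed first factor $\phi_1$ and lets Young's inequality absorb the remaining factors at no cost because each has unit $L^1$-norm. A secondary point deserving care is that the theorem claims the limit lies in $\mathscr{D}$, so uniform convergence of the functions alone does not suffice and the derivative estimates above are genuinely required.
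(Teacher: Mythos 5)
Your argument is correct and complete. The paper itself offers no proof here (it simply cites \cite{open}), and your telescoping estimate $\Knorm{\psi_{n+1}-\psi_n}{\rn}\leq\Knorm{\nabla\phi_1}{\rn}\,s(\phi_{n+1})$ --- obtained by shifting all derivatives onto the fixed factor $\phi_1$ and absorbing the rest via unit $L^1$-norms --- is exactly the standard route taken in that reference, including the derivative estimates needed to conclude $\psi\in\mathscr{D}$ and the observation that every $\psi_n$ is supported in the closed ball of radius $\sum_n s(\phi_n)$, which yields $s(\psi)\leq\sum_n s(\phi_n)$.
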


\begin{proof}
See \cite{open}.
\end{proof}

If $U\subseteq\rn$, $K\Subset U$, $(\varphi_n)\in\Delta$, and $f\in\cfuncs{U}$, then $(f\ast\varphi_n)$ is a sequence of functions of which only a finite number of elements can be not defined on all of $K$. This situation arises often in what follows in this paper.  We are going to ignore this fact and say, for example, \textit{the sequence of functions $(f\ast\varphi_n)$ converges uniformly on $K$} instead of saying something like \textit{there exists an $n_0\in\nats$ such that  the sequence of functions $(f\ast\varphi_n)$, for $n>n_0$, converges uniformly on $K$}. The following lemma is an example of such a situation.

\begin{lemm}\label{samelimit}
Let $U\subseteq\rn$ and $K\Subset U$. If $f\in\cfuncs{U}$ and $(\varphi_n)\in\Delta$, then ${\Knorm{f-f\ast\varphi_n}{K}\to0}$.
\end{lemm}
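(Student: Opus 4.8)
The plan is to recognize this as the standard approximate-identity argument, with the one wrinkle that $f$ is only guaranteed to be defined on $U$. First I would use the hypothesis $K\Subset U$ to pick $\vare_0>0$ with $\vare_0<d(K,U^\complement)$; then the closed $\vare_0$-neighborhood $\overline{K^{+\vare_0}}$ is a compact subset of $U$. On this compact set the continuous function $f$ is uniformly continuous, so for a prescribed $\eta>0$ there is $\delta\in(0,\vare_0)$ with $|f(x)-f(x')|<\eta$ whenever $x,x'\in\overline{K^{+\vare_0}}$ and $\|x-x'\|<\delta$.

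Next, since each $\varphi_n\in\mathscr{D}_o$ satisfies $\varphi_n\geq0$ and $\int\varphi_n=1$, I would rewrite the difference for $x\in K$ using $f(x)=\int_{\rn} f(x)\varphi_n(y)\,dy$ as
$$
f(x)-(f\ast\varphi_n)(x)=\int_{\rn}\bigl(f(x)-f(x-y)\bigr)\varphi_n(y)\,dy .
$$
Because $\supp(\varphi_n)\subset\overline{B_{s(\varphi_n)}}$, the integrand vanishes unless $\|y\|\le s(\varphi_n)$; and once $n$ is large enough that $s(\varphi_n)<\delta$, every point $x-y$ occurring here satisfies $x-y\in K^{+\delta}\subset\overline{K^{+\vare_0}}\subset U$, so $f(x-y)$ is genuinely defined, and $\|(x-y)-x\|<\delta$. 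Uniform continuity then gives $|f(x)-f(x-y)|<\eta$ throughout the support, whence
$$
|f(x)-(f\ast\varphi_n)(x)|\le\eta\int_{\rn}\varphi_n(y)\,dy=\eta
$$
for every $x\in K$. Taking the supremum over $x\in K$ yields $\Knorm{f-f\ast\varphi_n}{K}\le\eta$ for all sufficiently large $n$, which is exactly the assertion $\Knorm{f-f\ast\varphi_n}{K}\to0$.

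The two displayed estimates are routine; the only genuine point requiring care—and the reason the statement is phrased for $K\Subset U$ rather than for all of $U$—is ensuring that $f(x-y)$ makes sense, i.e. that the translate $x-y$ never leaves $U$. This is handled precisely by passing to the interior neighborhood $K^{+\vare_0}\Subset U$ and discarding the finitely many indices $n$ with $s(\varphi_n)\ge\delta$, in accordance with the convention adopted just before the statement. I do not expect any further obstacle.
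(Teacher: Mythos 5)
Your argument is correct and is essentially the same as the paper's: both pass to the compact neighborhood $\overline{K^{+\vare}}\Subset U$, use uniform continuity of $f$ there, write $f(x)-(f\ast\varphi_n)(x)=\int(f(x)-f(x-y))\varphi_n(y)\,dy$ via $\int\varphi_n=1$, and bound the integral by $\eta\Knorm{\varphi_n}{1}=\eta$ once $s(\varphi_n)<\delta$. Your explicit remark about why $f(x-y)$ is defined is exactly the point the paper handles by requiring $x-y\in\overline{K^{+\vare}}$; no changes needed.
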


\begin{proof} Let $\vare > 0$ be such that $\overline{K^{+\vare}}\subseteq U$ and let $\eta >0$. Since $\overline{K^{+\vare}}$ is compact, $f$ is uniformly continuous on $\overline{K^{+\vare}}$  and there is a $\delta>0$ such that  $\vert f(x)-f(y)\vert<\eta$ whenever $x,y\in\overline{K^{+\vare}}$ and $\| x-y\|<\delta$. Without loss of generality, we can assume that $\delta < \vare$. Let $n_0$ be such that $s(\varphi_n)<\delta$ for all $n>n_0$. Note that, for $n > n_0$, $x\in K$ and $y\in B_{s(\varphi_n)}$ ensures that $x-y\in \overline{K^{+\vare}}$ and $\vert x-(x-y)\vert<\delta$. Thus, for $n>n_0$, we have
\begin{align*}
\Knorm{f-f\ast\varphi_n}{K}&=\sup\limits_{x\in K}\left| f(x)-\int\limits_{B_{s(\varphi_n)}}f(x-y)\varphi_n(y)dy\right|\\
&=\sup\limits_{x\in K}\left| f(x)\int\limits_{B_{s(\varphi_n)}}\varphi_n(y)dy-\int\limits_{B_{s(\varphi_n)}}f(x-y)\varphi_n(y)dy\right|\\
&=\sup\limits_{x\in K}\left| \int\limits_{B_{s(\varphi_n)}}(f(x)-f(x-y))\varphi_n(y)dy\right|\\
&\leq\sup\limits_{x\in K}\int\limits_{B_{s(\varphi_n)}}\left|f(x)-f(x-y)\right|\left|\varphi_n(y)\right|dy\\
&<\eta\Knorm{\varphi_n}{1}=\eta.
\end{align*}
\end{proof}

\begin{cor}\label{seqconv}
Let $U$ be an open subset of $\rn$ and let $(f_n)$ be a uniformly convergent sequence of functions on $K\Subset U$. If  $(\varphi_n)\in\Delta$ and $s(\varphi_n)<\vare$ for all $n$, then ${(f_n - f_n\ast\varphi_n)\toto 0}$  on $K^{-\vare}$.
\end{cor}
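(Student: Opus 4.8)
The plan is to reduce this diagonal statement to the two lemmas already established by peeling off the uniform limit. Let $f$ denote the uniform limit of $(f_n)$ on $K$; since each $f_n$ is continuous and the convergence is uniform, $f\in\cfuncs{K}$. Note that the naive idea of applying Lemma \ref{samelimit} to each $f_n$ separately fails, because that lemma controls $\Knorm{f_n-f_n\ast\varphi_m}{K}$ as $m\to\infty$ for a fixed $n$, whereas here the mollifier index is tied to the function index. The uniform convergence of $(f_n)$ is precisely the extra ingredient that will let us pass to this diagonal.

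First I would write, for every $n$,
$$
f_n-f_n\ast\varphi_n=\bigl(f-f\ast\varphi_n\bigr)+\bigl(f_n-f\bigr)-\bigl(f_n-f\bigr)\ast\varphi_n ,
$$
using the linearity of convolution. The strategy is then to show that each of the three terms tends to $0$ uniformly on $K^{-\vare}$ and to conclude by the triangle inequality. For the first term, $f$ is a single continuous function, so Lemma \ref{samelimit} (or its short argument) gives $\Knorm{f-f\ast\varphi_n}{K}\to0$; since $K^{-\vare}\subseteq K$, the same holds with $K$ replaced by $K^{-\vare}$. The only point needing a word of care is that Lemma \ref{samelimit} is stated for $f\in\cfuncs{U}$, while our $f$ is only known to be continuous on $K$. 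This is harmless: because $f$ is continuous on the compact set $K$ it is uniformly continuous there, and for $x\in K^{-\vare}$ and $\|y\|<s(\varphi_n)<\vare$ the translate $x-y$ stays in $K$, so one may simply rerun the estimate of Lemma \ref{samelimit} with $K$ in place of $\overline{K^{+\vare}}$ (equivalently, apply the lemma with the open set $\mathrm{int}(K)$ and the compact set $\overline{K^{-\vare}}$). The second term is immediate, since $f_n-f\toto0$ on $K$ and hence on the smaller set $K^{-\vare}$.

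The third term is where Lemma \ref{normsplit} does the work. Because $s(\varphi_n)<\vare$ for all $n$ and $f_n-f\in\cfuncs{K}$, that lemma yields
$$
\Knorm{(f_n-f)\ast\varphi_n}{K^{-\vare}}\leq\Knorm{f_n-f}{K}\,\Knorm{\varphi_n}{1}=\Knorm{f_n-f}{K}\longrightarrow0,
$$
using $\Knorm{\varphi_n}{1}=1$ for $\varphi_n\in\mathscr{D}_o$. Combining the three estimates through the triangle inequality gives $\Knorm{f_n-f_n\ast\varphi_n}{K^{-\vare}}\to0$, which is the assertion.

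The main obstacle here is conceptual rather than computational: recognizing that the mismatch between the function index and the mollifier index cannot be handled by a single application of Lemma \ref{samelimit}, and must instead be resolved by splitting off the fixed limit $f$. Once that decomposition is in place, Lemma \ref{samelimit} handles the genuine ``diagonal'' behavior for the single function $f$, while Lemma \ref{normsplit} absorbs the approximation error $f_n-f$ uniformly; the domain bookkeeping for $K^{-\vare}\subseteq K$ and the continuity of $f$ on $K$ are then routine.
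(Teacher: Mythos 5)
Your proposal is correct and uses essentially the same argument as the paper: the identical three-term decomposition isolating the limit $f$, with Lemma \ref{normsplit} absorbing $\Knorm{(f_n-f)\ast\varphi_n}{K^{-\vare}}\leq\Knorm{f_n-f}{K}$ and Lemma \ref{samelimit} handling $\Knorm{f\ast\varphi_n-f}{K^{-\vare}}$. The only difference is that you explicitly address the minor domain issue of applying Lemma \ref{samelimit} to a function known only on $K$, which the paper passes over silently.
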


\begin{proof} If $f_n \toto f$ on $K$, then
\begin{align*}
\Knorm{f_n\ast\varphi_n-f_n}{K^{-\vare}}&\leq\Knorm{f_n\ast\varphi_n-f\ast\varphi_n}{K^{-\vare}}+\Knorm{f\ast\varphi_n-f_n}{K^{-\vare}}\\
&\leq \Knorm{(f_n-f)\ast\phi_n}{K^{-\vare}}+\Knorm{f\ast\varphi_n-f}{K^{-\vare}}+\Knorm{f-f_n}{K^{-\vare}}\\
&\leq\Knorm{f_n-f}{K}+\Knorm{f\ast\varphi_n-f}{K^{-\vare}}+\Knorm{f-f_n}{K^{-\vare}} \to 0,
\end{align*}
by Lemma \ref{normsplit} and Lemma \ref{samelimit}.
\end{proof}

\section{BOEHMIANS ON OPEN SETS} \label{Ch3}

\indent In this section we describe the construction of Boehmians on open subsets of $\rn$.  They are defined as equivalence classes of fundamental sequences of continuous functions.

\begin{defn}
Let $U\subseteq\rn$ be an open set and let $f_n\in\cfuncs{U}$, for $n\in\nats$. We say that the sequence $(f_n)$ is a \textit{fundamental sequence} if for every $K\Subset U$ there is $(\delta_n)\in\Delta$ such that, for every $m\in\nats$, the sequence $(f_n\ast\delta_m)$ is uniformly convergent on $K$ as $n\to\infty$ . The set of all fundamental sequences on $U$ will be denoted by $\mathscr{A}(U)$.\end{defn}

If $(f_n),(g_n)\in \mathscr{A}(U)$, we write $(f_n)\sim(g_n)$ when for every $K\Subset U$ there is $(\varphi_n)\in\Delta$ such that, for each $m$, $(f_n-g_n)\ast\varphi_m\toto 0$ on $K$ as $n\to\infty$.

\begin{lemm}\label{equiv}
$\sim$ is an equivalence relation.
\end{lemm}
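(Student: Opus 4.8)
The plan is to verify the three defining properties of an equivalence relation, with reflexivity and symmetry being immediate and transitivity carrying essentially all of the content. For reflexivity I would note that $\Delta\neq\emptyset$ (standard mollifiers) and that for any $(\varphi_n)\in\Delta$ one has $(f_n-f_n)\ast\varphi_m=0\toto0$ on every $K\Subset U$, so $(f_n)\sim(f_n)$. For symmetry, if $(f_n)\sim(g_n)$ and $(\varphi_n)$ is a witnessing delta sequence for a given $K$, then the \emph{same} $(\varphi_n)$ witnesses $(g_n)\sim(f_n)$, since $(g_n-f_n)\ast\varphi_m=-\bigl[(f_n-g_n)\ast\varphi_m\bigr]\toto0$ on $K$.

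The substance is transitivity, so assume $(f_n)\sim(g_n)$ and $(g_n)\sim(h_n)$ and fix $K\Subset U$. First I would pass to a slightly larger compact set: choose $\vare_0>0$ and compact $K'$ with $K\Subset K'\Subset U$ such that $K\subseteq(K')^{-\vare}$ for every $0<\vare\leq\vare_0$ (for instance $K'=\overline{K^{+2\vare_0}}$ with $\vare_0$ small enough that $K'\subseteq U$). Applying the two hypotheses to $K'$ yields $(\varphi_n),(\psi_n)\in\Delta$ with $(f_n-g_n)\ast\varphi_m\toto0$ and $(g_n-h_n)\ast\psi_m\toto0$ on $K'$ for every $m$. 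Since $s(\varphi_n)\to0$ and $s(\psi_n)\to0$, I fix $N_0$ with $s(\varphi_n),s(\psi_n)<\vare_0$ for all $n\geq N_0$ and define the candidate delta sequence $\chi_m=\varphi_{N_0+m}\ast\psi_{N_0+m}$. One checks $\chi_m\in\mathscr{D}_o$ (a convolution of nonnegative functions of total integral $1$) and $s(\chi_m)\leq s(\varphi_{N_0+m})+s(\psi_{N_0+m})\to0$, so $(\chi_m)\in\Delta$.

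It then remains to show $(f_n-h_n)\ast\chi_m\toto0$ on $K$ for each fixed $m$. Writing $f_n-h_n=(f_n-g_n)+(g_n-h_n)$ and using commutativity and associativity of convolution, I would split
\[
(f_n-h_n)\ast\chi_m=\bigl[(f_n-g_n)\ast\varphi_{N_0+m}\bigr]\ast\psi_{N_0+m}+\bigl[(g_n-h_n)\ast\psi_{N_0+m}\bigr]\ast\varphi_{N_0+m}.
\]
For fixed $m$ the inner sequence $(f_n-g_n)\ast\varphi_{N_0+m}\toto0$ on $K'$, and since $s(\psi_{N_0+m})<\vare_0$, Corollary \ref{cc} (with $\vare=\vare_0$) gives convergence of the first term to $0$ on $(K')^{-\vare_0}\supseteq K$; the second term is handled symmetrically, using $s(\varphi_{N_0+m})<\vare_0$. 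Hence each term, and so their sum, converges uniformly to $0$ on $K$, and as $m$ was arbitrary we conclude $(f_n)\sim(h_n)$.

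I expect the transitivity step to be the only real obstacle. The difficulty is that the two equivalences supply \emph{different} delta sequences, and naively forming $\varphi_m\ast\psi_m$ would force convolution against test functions of possibly large support at small indices $m$, eroding the domain so severely that convergence could no longer be asserted on $K$. The device that resolves this is to index into the tails of both sequences (the shift by $N_0$), so that every $\chi_m$ is built from test functions of support below the fixed threshold $\vare_0$; this keeps all the eroded domains $(K')^{-\vare_0}$ above $K$ while still yielding a genuine delta sequence.
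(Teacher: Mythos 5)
Your proof is correct and follows essentially the same route as the paper: decompose $f_n-h_n=(f_n-g_n)+(g_n-h_n)$, convolve with the product delta sequence $\varphi\ast\psi$, and invoke Corollary \ref{cc} on each term. The only difference is that you make explicit the domain bookkeeping (enlarging $K$ to $K'$ and shifting into the tails of the delta sequences), which the paper handles implicitly via its stated convention of ignoring the finitely many indices where convolutions may fail to be defined on all of $K$.
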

\begin{proof}
We will show that $\sim$ is transitive. Let $(f_n),(g_n),(h_n)\in\mathscr{A}(U)$ and let $K\Subset U$. There are $(\varphi_n),(\psi_n)\in\Delta$ such that $(f_n-g_n)\ast\varphi_m\toto 0$ and $(g_n-h_n)\ast\psi_m\toto 0$  on $K$ as $n\to\infty$ for every $m$.  Let $\delta_n=\varphi_n\ast\psi_n$.  Then $(\delta_n)\in\Delta$ and, for any $m$,
$$
(f_n-h_n)\ast\delta_m=(f_n-g_n)*\phi_m*\psi_m+(g_n-h_n)*\psi_m*\phi_m \toto 0
$$
on $K$,  by Corollary \ref{cc}. Therefore, $(f_n)\sim (h_n)$.
\end{proof}

By the Boehmians on $U$ we mean the equivalence classes of $\mathscr{A}(U)$ with respect to $\sim$.  The equivalence class of $(f_n)\in\mathscr{A}(U)$ will be denoted by $[(f_n)]$.  The space of all Boehmians on $U$ will be denoted by $\boehms{U}$, that is, $\boehms{U}=\mathscr{A}(U)/\sim$.

Note that for any function $f\in\cfuncs{U}$, the constant sequence $(f)$ is fundamental in $U$.  As such, there is a well defined inclusion $\boehms{U}\to\cfuncs{U}$ where a continuous function $f$ is mapped to the equivalence class $[(f)]\in\boehms{U}$.  Suppose we have a Boehmian $F\in\boehms{U}$ and a continuous function $f\in\cfuncs{U}$.  If $(f)\sim(f_n)$ for every $(f_n)\in F$, then we will write that $F=f$ on $U$ or $F\in\cfuncs{U}$ (though this is a slight abuse of notation). Similarly, we will write $\cfuncs{U}\subset\boehms{U}$.

While a Boehmian on a closed set is not defined, we will sometimes write $F\in\cfuncs{K}$ for $K$ a compact set.  In this case we mean that there is $U\supset{K}$ and $f\in\cfuncs{U}$ such that $F=f$ on $U$.

\begin{defn}
For $[(f_n)],[(g_n)]\in\boehms{U}$ and $r\in\reals$ we define:

\end{defn}

\begin{thm}
$\boehms{U}$ is a vector space over $\reals$ with the addition and scalar multiplication defined as
$$
[(f_n)]+[(g_n)]=[(f_n+g_n)] \quad \text{and} \quad r[(f_n)]=[(rf_n)].
$$
\end{thm}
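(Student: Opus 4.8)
The plan is to verify that $\boehms{U}$ is a vector space by checking three families of conditions: that the proposed operations are well-defined on equivalence classes, that they land back in $\boehms{U}$ (i.e.\ that $(f_n+g_n)$ and $(rf_n)$ are fundamental sequences whenever $(f_n)$ and $(g_n)$ are), and that the vector-space axioms hold. The axioms themselves will be essentially automatic, since addition and scalar multiplication are defined representative-wise and $\cfuncs{U}$ is already a vector space, so associativity, commutativity, distributivity, the zero element $[(0)]$, and additive inverses $[(-f_n)]$ all descend immediately from the corresponding identities in $\cfuncs{U}$.

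**First I would** confirm closure, which amounts to showing $\mathscr{A}(U)$ is closed under pointwise addition and scalar multiplication. Fix $K\Subset U$. Since $(f_n),(g_n)\in\mathscr{A}(U)$, there are delta sequences $(\delta_n),(\epsilon_n)\in\Delta$ witnessing fundamentality for $(f_n)$ and $(g_n)$ respectively on $K$. Setting $\gamma_n=\delta_n\ast\epsilon_n$ gives a delta sequence (by Theorem \ref{infiniteconv}, or directly since $s(\gamma_n)\le s(\delta_n)+s(\epsilon_n)\to 0$), and for each fixed $m$ one has
$$
(f_n+g_n)\ast\gamma_m=(f_n\ast\delta_m)\ast\epsilon_m+(g_n\ast\epsilon_m)\ast\delta_m,
$$
which converges uniformly on $K$ by Corollary \ref{cc} applied to each convergent summand. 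The scalar case is easier: the same $(\delta_n)$ works, since $(rf_n)\ast\delta_m=r(f_n\ast\delta_m)$ converges whenever $(f_n\ast\delta_m)$ does. Hence both sequences lie in $\mathscr{A}(U)$.

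**Next I would** establish well-definedness, which I expect to be the main obstacle, since it is the only step where one must genuinely combine two different equivalences. Suppose $(f_n)\sim(\tilde f_n)$ and $(g_n)\sim(\tilde g_n)$; I must show $(f_n+g_n)\sim(\tilde f_n+\tilde g_n)$ and $(rf_n)\sim(r\tilde f_n)$. For addition, fix $K\Subset U$ and take delta sequences $(\varphi_n),(\psi_n)$ with $(f_n-\tilde f_n)\ast\varphi_m\toto 0$ and $(g_n-\tilde g_n)\ast\psi_m\toto 0$ on $K$. Mirroring the transitivity argument in Lemma \ref{equiv}, I set $\theta_n=\varphi_n\ast\psi_n$ and write
$$
\bigl((f_n+g_n)-(\tilde f_n+\tilde g_n)\bigr)\ast\theta_m=(f_n-\tilde f_n)\ast\varphi_m\ast\psi_m+(g_n-\tilde g_n)\ast\psi_m\ast\varphi_m,
$$
so Corollary \ref{cc} forces each term to converge uniformly to $0$ on $K$. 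Well-definedness of scalar multiplication reuses the witness $(\varphi_n)$ for $(f_n)\sim(\tilde f_n)$, since $(rf_n-r\tilde f_n)\ast\varphi_m=r\bigl((f_n-\tilde f_n)\ast\varphi_m\bigr)\toto 0$.

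**Finally I would** remark that once the operations are shown to be well-defined and closed, the eight vector-space axioms are inherited directly from $\cfuncs{U}$ at the level of representatives and therefore pass to equivalence classes without further work. The conceptual content of the proof is entirely concentrated in the two places where Corollary \ref{cc} converts a convolution by a product delta sequence into a sum of already-controlled terms; everything else is bookkeeping.
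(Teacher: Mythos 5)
Your proof is correct and follows essentially the same route as the paper: well-definedness is handled by convolving with the product delta sequence $\varphi_m\ast\psi_m$ and splitting into the two terms controlled by Corollary \ref{cc}, and scalar multiplication reduces to $r(f_n\ast\varphi_m)=(rf_n)\ast\varphi_m$. You are in fact slightly more complete than the paper, which only verifies well-definedness and leaves closure of $\mathscr{A}(U)$ under the operations and the vector-space axioms implicit.
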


\begin{proof}
We show that addition and scalar multiplication of Boehmians are well defined. For $(f_n),(f'_n)\in [(f_n)]$ and $(g_n),(g'_n)\in[(g_n)]$ and any $K\Subset U$, we have $(\phi_n),(\psi_n)\in\Delta$ such that $(f_n-f'_n)\ast\phi_m \toto 0$ and $(g_n-g'_n)\ast\psi_m \toto 0$ on $K$ for each $m$.  Choosing $m'$ large enough (equivalently, $s(\phi_{m'})$ small enough) such that $((f_n+g_n)-(f'_n+g'_n))\ast(\phi_m\ast\psi_m)$ is defined on $K$ for every $m\geq m'$, we construct the delta sequence $(\phi_{m'+n}\ast\psi_{m'+n})$ such that
\begin{align*}
&\Knorm{((f_n+g_n)-(f'_n+g'_n))\ast(\phi_{m'+k}\ast\psi_{m'+k})}{K}\\
&\quad =\Knorm{(f_n-f'_n+g_n-g'_n)\ast(\phi_{m'+k}\ast\psi_{m'+k})}{K}\\
&\quad \leq\Knorm{(f_n-f'_n)\ast(\phi_{m'+k}\ast\psi_{m'+k})}{K}+\Knorm{(g_n-g'_n)\ast(\phi_{m'+k}\ast\psi_{m'+k})}{K},
\end{align*}
which converges to zero for every $k$ by Corollary \ref{cc}.  Thus ${(f_n+g_n)\sim (f'_n+g'_n)}$, so the addition is well defined. For scalar multiplication, it is sufficient to note that $r(f_n\ast\phi_m)=(rf_n\ast\phi_m)$.
\end{proof}

\section{A GLUING PROPERTY} \label{Ch4}

\indent By the restriction of a Boehmian $F\in\boehms{U}$ to an open set $V\subseteq U$, we mean $F\vert_V=[(f_n\vert_V)]$.  It is clear that this is well defined.

In this section we show that if $U,V\subset\rn$ are open, $U\cap V\neq\emptyset$, $F\in\boehms{U}$, $G\in\boehms{V}$, and $F\vert_{U\cap V}=G\vert_{U\cap V}$, then there is a Boehmian $H\in\boehms{U\cup V}$ such that $H\vert_U=F$ and $H\vert_V=G$. First we prove several technical lemmas.

\begin{lemm}\label{convdef}
Let $U\subseteq \rn$ be an open set, $\vare >0$, $\varphi\in\mathscr{D}_o$, and $s(\varphi)<\vare$.
\begin{enumerate}
\item[(i)] If $(f_n)\in \mathscr{A}(U)$, then $(f_n\ast\varphi)\in \mathscr{A}(U^{-\vare})$.
\item[(ii)] If $(f_n) \sim (g_n)$ in $\mathscr{A}(U)$, then $(f_n\ast\varphi) \sim (g_n \ast\varphi)$ in $\mathscr{A}(U^{-\vare})$.
\item[(iii)] If $[(f_m)]\in\boehms{U}$, then $[(f_m\ast\varphi)]\in \boehms{U^{-\vare}}$.
\end{enumerate}
\end{lemm}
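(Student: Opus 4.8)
The plan is to reduce everything to the two convergence facts already established and to handle the three parts in the order (i), (ii), (iii). Part~(iii) will be immediate once (i) and (ii) are in hand: if $[(f_m)]\in\boehms{U}$, then (i) shows that any representative $(f_n)$ produces a genuine fundamental sequence $(f_n\ast\varphi)\in\mathscr{A}(U^{-\vare})$, while (ii) shows that the resulting class $[(f_n\ast\varphi)]\in\boehms{U^{-\vare}}$ is independent of the representative chosen. So the work is entirely in (i) and (ii), which share the same geometric core.

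The main obstacle, and the only real subtlety, is domain bookkeeping: convolving with $\varphi$ contracts the domain by $s(\varphi)$, yet we must control convergence on a prescribed $K\Subset U^{-\vare}$. My device is to fatten $K$ slightly before invoking the hypotheses on $U$. Writing $\sigma=s(\varphi)<\vare$, recall that $x\in U^{-\vare}$ precisely when $d(x,U^\complement)>\vare$; since $K\Subset U^{-\vare}$ is compact, $\min_{x\in K}d(x,U^\complement)>\vare$, so I may fix $\sigma'$ with $\sigma<\sigma'<\vare$ and set $K'=\overline{K^{+\sigma'}}$. The estimate $d(z,U^\complement)\ge d(x,U^\complement)-\sigma'>0$ for $z\in K'$ shows $K'\Subset U$, and $d(x,(K')^\complement)\ge\sigma'$ for $x\in K$ shows $K\subseteq (K')^{-\sigma''}$ for any $\sigma''$ with $\sigma<\sigma''<\sigma'$.

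For (i), note first that $\sigma<\vare$ gives $U^{-\sigma}\supseteq U^{-\vare}$, so each $f_n\ast\varphi$ lies in $\cfuncs{U^{-\vare}}$. Now, since $(f_n)\in\mathscr{A}(U)$ and $K'\Subset U$, there is a $(\delta_n)\in\Delta$ such that for every $m$ the sequence $(f_n\ast\delta_m)$ converges uniformly on $K'$, say to $g^{(m)}$. I claim the same $(\delta_n)$ witnesses that $(f_n\ast\varphi)$ is fundamental on $U^{-\vare}$. Using commutativity and associativity of convolution, $(f_n\ast\varphi)\ast\delta_m=(f_n\ast\delta_m)\ast\varphi$; applying Corollary~\ref{cc} to $f_n\ast\delta_m-g^{(m)}\toto0$ on $K'$ (legitimate because $s(\varphi)=\sigma<\sigma''$) yields $(f_n\ast\delta_m)\ast\varphi\toto g^{(m)}\ast\varphi$ on $(K')^{-\sigma''}\supseteq K$. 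Hence $\big((f_n\ast\varphi)\ast\delta_m\big)$ converges uniformly on $K$ for every $m$, which is exactly the fundamental-sequence condition on $U^{-\vare}$.

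Part (ii) runs along identical lines with $f_n-g_n$ in place of $f_n$. Given $K\Subset U^{-\vare}$, I form $K'$ as above; the relation $(f_n)\sim(g_n)$ on $U$ supplies a $(\psi_n)\in\Delta$ with $(f_n-g_n)\ast\psi_m\toto0$ on $K'$ for each $m$. Since $(f_n\ast\varphi)-(g_n\ast\varphi)=(f_n-g_n)\ast\varphi$ and $\big((f_n-g_n)\ast\varphi\big)\ast\psi_m=\big((f_n-g_n)\ast\psi_m\big)\ast\varphi$, Corollary~\ref{cc} again transports the convergence to $0$ from $K'$ down to $(K')^{-\sigma''}\supseteq K$, so that $(f_n\ast\varphi)\sim(g_n\ast\varphi)$ on $U^{-\vare}$. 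Throughout, the only terms failing to be defined on $K$ are the finitely many for which $s(\delta_m)$ or $s(\psi_m)$ is too large relative to $d(K',U^\complement)$, and these are ignored in accordance with the convention adopted before Lemma~\ref{samelimit}.
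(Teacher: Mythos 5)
Your proof is correct and follows essentially the same route as the paper's: fatten $K\Subset U^{-\vare}$ to a compact subset of $U$, apply the fundamental-sequence (resp.\ equivalence) hypothesis there, and use Corollary~\ref{cc} together with $(f_n\ast\varphi)\ast\delta_m=(f_n\ast\delta_m)\ast\varphi$ to bring the uniform convergence back down to $K$, with (iii) following formally from (i) and (ii). The only difference is that you fatten by $\sigma'<\vare$ rather than by $\vare$ itself, which is a slightly more careful handling of the strict inequalities in Corollary~\ref{cc} but not a different argument.
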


\begin{proof}
Let $(f_n)\in \mathscr{A}(U)$ and let $K\Subset U^{-\vare}$. Then  $\overline{K^{+\vare}}\Subset U$, so there exists $(\delta_n)\in\Delta$ such that  $(f_n\ast\delta_m)$ converges uniformly on $\overline{K^{+\vare}}$ as $n\to\infty$, for every $m\in\nats$. Since $s(\varphi)<\vare$, $(f_n\ast\delta_m\ast\varphi)$ is well defined on $K$ for  all $m,n\in \nats$.  Moreover, by Lemma \ref{cc}, for every $m\in\nats$ the sequence $(f_n\ast\delta_m)\ast\varphi=(f_n\ast\varphi)\ast\delta_m$ is uniformly convergent on $K$.  Hence $(f_n\ast\varphi)\in \mathscr{A}(U^{-\vare})$, so (i) is shown.

If $(f_n)\sim(g_n)$ in $\mathscr{A}(U)$, then there is a $(\delta_n)\in\Delta$ such that, for each $m\in\nats$, $(f_n-g_n)\ast\delta_m\toto 0$ on $\overline{K^{+\vare}}$ as $n\to\infty$.  Hence, for every $m\in\nats$, $(f_n\ast\varphi-g_n\ast\varphi)\ast\delta_m=((f_n-g_n)\ast\delta_m)\ast\varphi\toto 0$ on $K$ as $n\to\infty$, again by Lemma \ref{cc}. Consequently, $(f_n*\varphi) \sim (g_n *\varphi)$ in $\mathscr{A}(U^{-\vare})$, which shows (ii).

Item (iii) is a direct consequence of (i) and (ii).
\end{proof}

\begin{cor}\label{welldef}
Let $U\subseteq\rn$ be open, $F\in\boehms{U}$, $\vare>0$, $\varphi\in\mathscr{D}_o$, and $s(\varphi)<\vare$. If $(f_n)\in F$, then $F\ast\varphi=[(f_n\ast\varphi)]$ is well defined as a Boehmian on $U^{-\vare}$.
\end{cor}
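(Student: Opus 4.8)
The plan is to observe that all the substantive work has already been carried out in Lemma \ref{convdef}, so the corollary amounts to assembling its three parts and verifying that both requirements of well-definedness are met. Recall that to say $[(f_n\ast\varphi)]$ is a well-defined Boehmian on $U^{-\vare}$ demands two things: first, that $(f_n\ast\varphi)$ is genuinely a fundamental sequence on $U^{-\vare}$, and second, that the resulting equivalence class does not depend on which representative $(f_n)\in F$ is chosen.

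First I would note that since $F\in\boehms{U}$ and $(f_n)\in F$, by definition $(f_n)\in\mathscr{A}(U)$. Lemma \ref{convdef}(i) then gives immediately that $(f_n\ast\varphi)\in\mathscr{A}(U^{-\vare})$, so the class $[(f_n\ast\varphi)]$ is indeed an element of $\boehms{U^{-\vare}}$; this is exactly item (iii) of the lemma. For independence of the representative, I would take any second representative $(g_n)\in F$, so that $(f_n)\sim(g_n)$ in $\mathscr{A}(U)$. Lemma \ref{convdef}(ii) then yields $(f_n\ast\varphi)\sim(g_n\ast\varphi)$ in $\mathscr{A}(U^{-\vare})$, whence $[(f_n\ast\varphi)]=[(g_n\ast\varphi)]$. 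Since this holds for every choice of representative, $F\ast\varphi=[(f_n\ast\varphi)]$ is unambiguously defined.

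There is no genuine obstacle here: the estimates and uniform-convergence arguments that make the statement true are precisely those established in the proof of Lemma \ref{convdef}, where Corollary \ref{cc} was used to commute the convolution with $\varphi$ past the convolution with the delta sequences. The only point requiring care is to confirm \emph{both} aspects of well-definedness, namely membership in $\mathscr{A}(U^{-\vare})$ and invariance under $\sim$, rather than just one; but parts (i) and (ii) of the lemma supply these directly, so the corollary follows at once.
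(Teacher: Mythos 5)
Your proposal is correct and matches the paper's (implicit) argument exactly: the corollary is stated without proof precisely because it follows at once from parts (i) and (ii) of Lemma \ref{convdef}, which are the two halves of well-definedness you identify. Nothing further is needed.
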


\begin{lemm}\label{funccor}
Let $U\subseteq\rn$. If $F=[(f_n)]\in\boehms{U}$ and $K\Subset U$, then there exists a $\psi\in\mathscr{D}_o$ such that $F\ast\psi\in\mathscr{C}(K)$ and $(f_n\ast\psi)\toto F\ast\psi$ on $K$.
\end{lemm}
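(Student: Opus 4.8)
The plan is to take $\psi$ to be a single, sufficiently late term of the delta sequence that the fundamental-sequence property attaches to a compact neighborhood of $K$. The key observation is that the defining property of $\mathscr{A}(U)$ already supplies uniform convergence of $(f_n\ast\delta_m)_n$ for each fixed $m$; what remains is to choose $m$ so that the support of $\delta_m$ is small enough for every convolution to be defined on a neighborhood of $K$, and then to identify the resulting Boehmian $F\ast\psi$ with the continuous limit function.

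First I would fix $\vare>0$ small enough that $\overline{K^{+2\vare}}\Subset U$, which is possible because $K$ is compact and $U$ is open, and set $K'=\overline{K^{+\vare}}$, so that $K\Subset K^{+\vare}\subseteq K'\Subset U$. Applying the definition of a fundamental sequence to $K'$ yields a delta sequence $(\delta_n)\in\Delta$ such that, for every $m$, the sequence $(f_n\ast\delta_m)_n$ converges uniformly on $K'$ as $n\to\infty$. Since $s(\delta_m)\to 0$, I would pick $m$ with $s(\delta_m)<\vare$ and set $\psi=\delta_m\in\mathscr{D}_o$; the condition $s(\psi)<\vare$ together with $\overline{K^{+2\vare}}\Subset U$ forces $K'\subseteq U^{-s(\psi)}$, so that each $f_n\ast\psi$ is defined on $K'$. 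Writing $g$ for the uniform limit of $(f_n\ast\psi)$ on $K'$, the function $g$ is continuous on $K'$ as a uniform limit of continuous functions, hence continuous on the open set $K^{+\vare}\supseteq K$.

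It then remains to verify that $F\ast\psi$, which by Corollary \ref{welldef} is the Boehmian $[(f_n\ast\psi)]$ on $U^{-s(\psi)}$, agrees with $g$ on a neighborhood of $K$. I would show $(f_n\ast\psi)\sim(g)$ as fundamental sequences on $V=K^{+\vare}$: given any $L\Subset V$ and any $(\varphi_n)\in\Delta$, the uniform convergence $f_n\ast\psi-g\toto 0$ on $K'\supseteq\overline{L}$ together with Corollary \ref{cc} gives $((f_n\ast\psi)-g)\ast\varphi_m\toto 0$ on $L$ for each $m$. Hence $F\ast\psi=g$ on $V$, so $F\ast\psi\in\mathscr{C}(K)$, and the uniform convergence on $K'\supseteq K$ is exactly the assertion $(f_n\ast\psi)\toto F\ast\psi$ on $K$.

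The routine content is the $\vare$-bookkeeping that keeps all convolutions living on the right sets; the only genuinely conceptual step is the last one, the identification of the Boehmian $F\ast\psi$ with the honest continuous function $g$ near $K$, and even this reduces quickly to Corollary \ref{cc} once the uniform convergence of $(f_n\ast\psi)$ on a compact neighborhood of $K$ is in hand. I expect the main obstacle to be purely organizational: choosing the nested neighborhoods $K\subseteq K^{+\vare}\subseteq K'\subseteq U$ consistently, so that definedness, continuity of the limit, and the equivalence argument all go through for one and the same $\psi$.
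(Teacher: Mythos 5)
Your proof is correct and follows essentially the same route as the paper's: extract a delta sequence from the fundamental-sequence property on a compact neighborhood $\overline{K^{+\vare}}$ of $K$, take a single term with small enough support as $\psi$, let $g$ be the uniform limit of $(f_n\ast\psi)$, and identify $F\ast\psi$ with $g$ near $K$ via Corollary \ref{cc}. The only nitpick is the phrase ``any $(\varphi_n)\in\Delta$'' in the equivalence step --- the definition of $\sim$ only asks you to exhibit one delta sequence, and you should take one whose supports are small enough that the convolutions are defined on $L$ --- but this is a trivially repaired quantifier, not a gap.
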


\begin{proof}
 Let $K\Subset U$.  We show that there is $\psi\in\mathscr{D}_o$ such that $(f_n\ast\psi)$ converges uniformly on $K$ and that its limit can be identified with $F\ast\psi$ on an open set containing $K$.  Take an $\vare >0$ such that $K_1=\overline{K^{+\vare}} \Subset U$.  Since $(f_n)$ is fundamental, there is $(\phi_n)\in\Delta$ such that, for each $m\in\nats$, $(f_n\ast\phi_m)$ converges uniformly on $K_1$ as $n\to\infty$.  Without loss of generality, we can assume that $s(\phi_n) < \vare / 2$ for all $n$. Let $\psi=\phi_1$ and let $g$ be the uniform limit of $(f_n\ast\psi)$ on $K_1$.  By Corollary \ref{cc}, for every $m$, $(f_n\ast\psi - g)\ast\phi_m \toto 0$ on $K^{+\frac{\vare}2}$. Hence, $(f_n\ast\psi)\sim(g)$ on $K^{+\frac{\vare}2}$. Consequently, $F\ast\psi=[g]$ as elements of $\boehms{K^{+\frac{\vare}2}}$ and we have $F\ast\psi=g$ on $K$.
\end{proof}

\begin{lemm}\label{commute}
Let $U\subseteq\rn$ be open, $F\in\boehms{U}$, $\vare>0$, $\varphi\in\mathscr{D}_o$, and $s(\varphi)<\vare$.  Then $(F\ast\phi)\vert_{U^{-\vare}}=F\vert_U\ast\phi$ on $U^{-\vare}$.
\end{lemm}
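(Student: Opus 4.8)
The plan is to reduce the asserted identity to a statement about representatives and then to the elementary fact that, for the ordinary convolution of a continuous function with a test function, restriction to $U^{-\vare}$ commutes with convolving by $\varphi$. Fix a representative $(f_n)\in F$, so that $F=F\vert_U=[(f_n)]$ in $\boehms{U}$. By Corollary \ref{welldef}, $F\ast\varphi=[(f_n\ast\varphi)]$ is a Boehmian on $U^{-\vare}$, and by the definition of restriction $(F\ast\varphi)\vert_{U^{-\vare}}=[\,((f_n\ast\varphi)\vert_{U^{-\vare}})\,]$. Since each $f_n\ast\varphi$ is already defined on $U^{-\vare}$, restricting to $U^{-\vare}$ changes nothing, so the left-hand side is $[(f_n\ast\varphi)]$. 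On the right, $F\vert_U=[(f_n)]=F$, whence $F\vert_U\ast\varphi=[(f_n\ast\varphi)]$ as well. Thus the whole matter comes down to checking that these two readings of ``$F\ast\varphi$'' really produce the same fundamental sequence on $U^{-\vare}$.

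The one substantive point worth isolating is the geometric fact that makes convolution insensitive to the behaviour of $f$ off $U$. For a continuous $f$ on $U$, a point $x\in U^{-\vare}$, and $s(\varphi)<\vare$, the value $(f\ast\varphi)(x)=\int f(x-y)\varphi(y)\,dy$ samples $f$ only at points $x-y$ with $y\in\supp(\varphi)\subseteq\overline{B_{s(\varphi)}}\subseteq B_\vare$. I would first verify that $x\in U^{-\vare}$ forces $x-y\in U$ for every $\|y\|<\vare$: if instead $x-y\in U^\complement$, then $x=(x-y)+y\in U^\complement+B_\vare\subseteq\overline{U^\complement+B_\vare}$, contradicting $x\notin\overline{U^\complement+B_\vare}$. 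Hence the convolution integral at each $x\in U^{-\vare}$ is unaffected by values of $f$ outside $U$, which is exactly what guarantees both that $(f_n\ast\varphi)\vert_{U^{-\vare}}$ is well defined and that it coincides with the convolution of the restriction.

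Finally I would assemble these observations: both $(F\ast\varphi)\vert_{U^{-\vare}}$ and $F\vert_U\ast\varphi$ are represented by the single fundamental sequence $(f_n\ast\varphi)$, which lies in $\mathscr{A}(U^{-\vare})$ by Lemma \ref{convdef}(i), so they name the same Boehmian on $U^{-\vare}$; independence of the chosen representative is furnished by Lemma \ref{convdef}(ii). I do not expect any serious obstacle here, since the delicate uniform-convergence estimates have already been absorbed into Corollary \ref{welldef} and Lemma \ref{convdef}; the remaining content is the support argument above together with the bookkeeping observation that restricting an element of $\boehms{U^{-\vare}}$ to $U^{-\vare}$ is the identity.
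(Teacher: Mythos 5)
The paper states Lemma \ref{commute} without any proof, so there is no argument of the authors' to compare against; your proposal supplies the missing justification and it is correct. Your reduction is the right one: both sides are represented by the single sequence $(f_n\ast\varphi)$, the left because $F\ast\varphi$ already lives in $\boehms{U^{-\vare}}$ (Corollary \ref{welldef}) and restricting an element of $\boehms{U^{-\vare}}$ to $U^{-\vare}$ is the identity, the right because $F\vert_U=F$; and your support computation (that $x\in U^{-\vare}$ and $\|y\|<\vare$ force $x-y\in U$, since otherwise $x\in U^\complement+B_\vare\subseteq\overline{U^\complement+B_\vare}$) is exactly the geometric fact that makes $(f_n\ast\varphi)\vert_{U^{-\vare}}$ depend only on $f_n\vert_U$. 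One remark worth adding: as literally stated the lemma is nearly tautological, and the way it is actually invoked later (in the proof of Lemma \ref{countable}, where one writes $(G_n\ast\phi_n)\vert_W=G_n\vert_{V_{n_1}}\ast\phi_n$ with $G_n$ defined on a strictly larger set) is the more substantive version: for open $V\subseteq U$ and $F\in\boehms{U}$ one wants $(F\ast\varphi)\vert_{V^{-\vare}}=F\vert_V\ast\varphi$. Your support argument is precisely what proves that version as well --- for $x\in V^{-\vare}$ the integral $(f_n\ast\varphi)(x)$ samples $f_n$ only on $V$, so $(f_n\ast\varphi)\vert_{V^{-\vare}}=(f_n\vert_V)\ast\varphi$ pointwise --- so you have in fact proved the statement in the generality in which it is used, and no gap remains.
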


\begin{lemm}\label{findseq}
Let $U\subseteq\rn$ be open and let $F\in\boehms{U}$. For every $\vare>0$ there is a $\varphi\in\mathscr{D}_o$ such that $F\ast\varphi\in\mathscr{C}(U^{-\vare})$.
\end{lemm}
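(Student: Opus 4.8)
The plan is to manufacture a single test function $\varphi$ that renders $F$ continuous simultaneously on every compact piece of $U^{-\vare}$, by fusing the compactly-tailored test functions supplied by Lemma \ref{funccor} into one infinite convolution. The guiding principle is that continuity is local: if $F\ast\varphi$ agrees with a continuous function on each member of a compact exhaustion of $U^{-\vare}$, then these local representatives must agree on overlaps --- any two continuous functions representing the same Boehmian coincide, which one reads off from Lemma \ref{samelimit} by noting that for a constant sequence the condition $(g-h)\ast\theta_m\toto 0$ forces $(g-h)\ast\theta_m=0$ and then letting $m\to\infty$ --- and so glue to a single continuous function on all of $U^{-\vare}$.

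Concretely, I would first fix a compact exhaustion $K_1\subseteq K_2\subseteq\cdots$ of $U^{-\vare}$ with $\bigcup_j K_j=U^{-\vare}$, together with a summable budget $\delta_j=\vare/2^{\,j+1}$, so that $\sum_j\delta_j=\vare/2$. Since $U^{-\vare}=\{x:d(x,U^\complement)>\vare\}$, each $K_j$ sits at distance strictly greater than $\vare$ from $U^\complement$, so the enlarged compact set $L_j=\overline{K_j^{+\vare/2}}$ still satisfies $L_j\Subset U$, leaving room to spare. Applying Lemma \ref{funccor} to $L_j$ --- and, inspecting its proof, taking the test function from arbitrarily far along the relevant delta sequence so that its support radius is as small as we please --- yields $\psi_j\in\mathscr{D}_o$ with $s(\psi_j)<\delta_j$ and a continuous $g_j=F\ast\psi_j$ on $L_j$. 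Because $s(\psi_j)\to 0$ and $\sum_j s(\psi_j)<\vare/2<\infty$, the sequence $(\psi_j)$ lies in $\Delta$ and Theorem \ref{infiniteconv} produces the infinite convolution $\varphi=\psi_1\ast\psi_2\ast\cdots\in\mathscr{D}_o$ with $s(\varphi)\leq\sum_j s(\psi_j)<\vare$; hence Corollary \ref{welldef} makes $F\ast\varphi$ a well-defined Boehmian on $U^{-\vare}$.

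The heart of the argument is to check that $F\ast\varphi\in\cfuncs{K_j}$ for each $j$. For fixed $j$, split off the $j$th factor: writing $\chi_j$ for the infinite convolution of the remaining $\psi_i$ with $i\neq j$, Theorem \ref{infiniteconv} gives $\chi_j\in\mathscr{D}_o$ with $s(\chi_j)\leq\sum_{i\neq j}\delta_i<\vare/2$, and, by commutativity and continuity of convolution, $\varphi=\psi_j\ast\chi_j$. Associativity at the level of representatives then gives $F\ast\varphi=(F\ast\psi_j)\ast\chi_j$, and since $F\ast\psi_j=g_j$ on $K_j^{+\vare/2}$, Lemma \ref{convdef} lets me convolve this equality by $\chi_j$ to obtain $F\ast\varphi=g_j\ast\chi_j$ on $(K_j^{+\vare/2})^{-s(\chi_j)}$, a set which still contains $K_j$ because $s(\chi_j)<\vare/2$; the right-hand side is a genuine continuous function. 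Finally the local representatives $g_j\ast\chi_j$ agree on overlaps by uniqueness of the continuous representative and glue to a single $g\in\cfuncs{U^{-\vare}}$ with $[(g)]=F\ast\varphi$, which is precisely $F\ast\varphi\in\cfuncs{U^{-\vare}}$.

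The main obstacle is exactly this passage from the compact-dependent test functions of Lemma \ref{funccor} to one global $\varphi$: the infinite convolution is the device that amalgamates them, but it succeeds only because the geometry of $U^{-\vare}$ --- every point at distance greater than $\vare$ from $U^\complement$ --- affords more room than the total convolution radius $s(\varphi)<\vare$ ever consumes. Keeping careful track of these radii, so that each convolution by $\chi_j$ stays inside $U$ and lands on a set still containing $K_j$, is the delicate bookkeeping the proof must get right.
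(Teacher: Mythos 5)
Your argument is correct and is essentially the paper's own proof (of its unbounded case, which subsumes the bounded one): cover $U^{-\vare}$ by countably many precompact pieces with an $\vare/2$ margin, apply Lemma \ref{funccor} to each with support radii summing to less than $\vare/2$, form the infinite convolution via Theorem \ref{infiniteconv}, and verify continuity locally by factoring out the one relevant $\psi_j$. The only cosmetic differences are that the paper treats the bounded case separately and works with open sets $U_n$ containing an $\vare/2$-ball around each point of $U^{-\vare}$ where you use the thickened compacts $L_j=\overline{K_j^{+\vare/2}}$.
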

\begin{proof}
If $U^{-\vare}$ is bounded, then we can choose $K$ such that $U^{-\vare}\subset K\Subset U$ and then use Lemma \ref{funccor}. If $U^{-\vare}$ is unbounded, then there exists a sequence of open sets $U_n$ such that $U^{-\vare} \subseteq \bigcup_{n=1}^\infty U_n\subseteq U$,
$\overline{U_n}\Subset U$ for every $n\in\mathbb{N}$, and
for each $x\in U^{-\vare}$ there is an $n\in\mathbb{N}$ such that $B_{\frac{\vare}{2}}(x)\subseteq U_n$. By Lemma \ref{funccor}, for every $n$ there is a $\varphi_n\in\mathscr{D}_o$ such that $s(\varphi_n)<\frac{\vare}{2^{n+1}}$ and $F\ast\varphi_n\in\cfuncs{\overline{U_n}}$.  Define $\varphi=\varphi_1\ast\varphi_2\ast\ldots$. Then $\varphi\in\mathscr{D}_o$ and $s(\varphi)<\sum\frac{\vare}{2^{n+1}}=\frac{\vare}{2}$, by Theorem \ref{infiniteconv}.  To show continuity of $F\ast\varphi$ in $U^{-\vare}$ consider an arbitrary $x\in U^{-\vare}$.  Then $x\in U_n$, for some $n$, and $F\ast\varphi_n$ is continuous at $x$.  Since $s(\varphi_1\ast\ldots\ast\varphi_{n-1}\ast\varphi_{n+1}\ast\ldots)<\frac{\vare}{2}$, the convolution $F\ast\varphi=(F\ast\varphi_n)\ast(\varphi_1\ast\ldots\ast\varphi_{n-1}\ast\varphi_{n+1}\ast\ldots)$ is well defined on $U_n$ and hence is continuous at $x$.
\end{proof}

\begin{lemm}\label{sameoninter}
Let $U$ and $V$ be open sets in $\rn$ with a non-empty intersection $W$.  If $F\in\boehms{U}$, $G\in\boehms{V}$, and $F\vert_W=G\vert_W$, then for any $\vare>0$ there exists $(\delta_n)\in\Delta$ such that the following two conditions hold for every $n\in\nats$:
\begin{itemize}
\item[(i)] $F\ast\delta_n\in\cfuncs{U^{-\vare}}$ and $G\ast\delta_n\in\cfuncs{V^{-\vare}}$,
\item[(ii)] $F\ast\delta_n=G\ast\delta_n$ on $W^{-\vare}=U^{-\vare}\cap V^{-\vare}$.
\end{itemize}
\end{lemm}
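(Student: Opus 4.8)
The plan is to produce a single delta sequence $(\delta_n)$ that simultaneously regularizes $F$ over $U$ and $G$ over $V$, and then to upgrade the equality $F\vert_W=G\vert_W$ of Boehmians to an equality of the resulting continuous functions on the overlap. Fix $\vare>0$ and set $\vare_n=\min\{\vare,1/n\}$, so that $\vare_n\downarrow 0$ while $U^{-\vare_n}\supseteq U^{-\vare}$ and $V^{-\vare_n}\supseteq V^{-\vare}$ for every $n$.

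For the continuity condition (i), I would, for each $n$, invoke Lemma \ref{findseq} twice: once for $F$ on $U$ with parameter $\vare_n/2$, yielding $\alpha_n\in\mathscr{D}_o$ with $F\ast\alpha_n\in\cfuncs{U^{-\vare_n/2}}$, and once for $G$ on $V$ with the same parameter, yielding $\beta_n\in\mathscr{D}_o$ with $G\ast\beta_n\in\cfuncs{V^{-\vare_n/2}}$; the construction in the proof of Lemma \ref{findseq} moreover gives $s(\alpha_n),s(\beta_n)<\vare_n/4$. Put $\delta_n=\alpha_n\ast\beta_n$. Then $\delta_n\in\mathscr{D}_o$ and $s(\delta_n)\leq s(\alpha_n)+s(\beta_n)<\vare_n/2\to 0$, so $(\delta_n)\in\Delta$. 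Since $F\ast\delta_n=(F\ast\alpha_n)\ast\beta_n$ is the convolution of the continuous function $F\ast\alpha_n$ with a test function of support below $\vare_n/4$, it is continuous on $U^{-3\vare_n/4}$, a set containing $U^{-\vare}$; hence $F\ast\delta_n\in\cfuncs{U^{-\vare}}$. By commutativity $\delta_n=\beta_n\ast\alpha_n$, so $G\ast\delta_n=(G\ast\beta_n)\ast\alpha_n\in\cfuncs{V^{-\vare}}$ by the same argument with the roles reversed. This gives (i).

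For condition (ii), the hypothesis $F\vert_W=G\vert_W$ in $\boehms{W}$ together with Lemma \ref{commute} (convolution commutes with restriction), applied to the open subset $W$ of $U$ and of $V$ respectively, yields $(F\ast\delta_n)\vert_{W^{-\vare}}=(F\vert_W)\ast\delta_n=(G\vert_W)\ast\delta_n=(G\ast\delta_n)\vert_{W^{-\vare}}$ as Boehmians on $W^{-\vare}=U^{-\vare}\cap V^{-\vare}$. By part (i), both $F\ast\delta_n$ and $G\ast\delta_n$ are genuine continuous functions on $W^{-\vare}$, so this is an equality of continuous functions regarded as Boehmians. Since the inclusion $\cfuncs{W^{-\vare}}\hookrightarrow\boehms{W^{-\vare}}$ is injective—if two constant sequences are equivalent, then convolving with a delta sequence and passing to the limit via Lemma \ref{samelimit} forces the functions to coincide—we conclude $F\ast\delta_n=G\ast\delta_n$ on $W^{-\vare}$, which is (ii).

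The main obstacle is the tension between the two demands on $(\delta_n)$: we need $s(\delta_n)\to 0$ so that $(\delta_n)\in\Delta$, yet $F\ast\delta_n$ must be continuous on the \emph{fixed} eroded set $U^{-\vare}$ for every $n$, which at first glance appears to require a non-negligible amount of smoothing. This is exactly what Lemma \ref{findseq} resolves: applying it with the shrinking parameters $\vare_n/2$ delivers continuity on the enlarging sets $U^{-\vare_n/2}\supseteq U^{-\vare}$ while keeping the support below $\vare_n/4$, so small support and continuity on the target set are compatible. The remaining points are routine: the erosion arithmetic $(U^{-a})^{-b}\supseteq U^{-(a+b)}$, and the fact that one $\delta_n$ serves both $F$ and $G$, which works because convolution is commutative and carries continuous functions to continuous functions.
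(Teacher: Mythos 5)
Your proposal is correct and follows essentially the same route as the paper: both apply Lemma \ref{findseq} to $F$ and $G$ separately with shrinking parameters $\vare_n$ and take $\delta_n$ to be the convolution of the two resulting test functions, so that continuity on the fixed sets $U^{-\vare}$ and $V^{-\vare}$ coexists with $s(\delta_n)\to 0$. The only divergence is in (ii), where you invoke Lemma \ref{commute} together with injectivity of $\cfuncs{W^{-\vare}}\hookrightarrow\boehms{W^{-\vare}}$, whereas the paper unwinds the relation $\sim$ directly on a compact $K\Subset W^{-\vare}$ using an auxiliary delta sequence and Corollary \ref{cc}; the underlying computation is the same.
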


\begin{proof} (i) Let $\vare>0$. If $\vare_n=\frac{\vare}{2^n}$, then $\left(U^{-\vare_n}\right)$ is an increasing sequence of sets such that $\bigcup_{n\in\nats}U^{-\vare_n}=U$.  By Lemma \ref{findseq}, there exists a sequence $\varphi_n\in\mathscr{D}_o$ such that $F\ast\varphi_n\in\cfuncs{U^{-\vare_n}}$ and $s(\varphi_n)<\vare_n$.  Similarly, there exists a sequence $\psi_n\in\mathscr{D}_o$ such that $G\ast\psi_n\in\cfuncs{V^{-\vare_n}}$ and $s(\psi_n)<\vare_n$. Let $\delta_n=\varphi_n\ast\psi_n$. For each $n$, $F\ast\delta_n=F\ast\varphi_n\ast\psi_n$ is a continuous function on $U^{-\vare}$. Similarly, $G\ast\delta_n$ is a continuous function on $V^{-\vare}$.

(ii) Let $f_m^\ast=F\ast\delta_m$ and $g_m^\ast=G\ast\delta_m$.  By Corollary \ref{welldef}, for any $(f_n)\in F$ and $(g_n)\in G$, $(f_n\ast\delta_m)\sim\{f_m^\ast,f_m^\ast,f_m^\ast\ldots\}$ on $U^{-\vare}$ and $(g_n\ast\delta_m)\sim\{g_m^\ast,g_m^\ast,g_m^\ast\ldots\}$ on $V^{-\vare}$. We must show then that $(f_n\ast\delta_m)\sim (g_n\ast\delta_m)$ on $W^{-\vare}$ for each $m$.  Let $K\Subset W^{-\vare}$ and fix $m\in\nats$. Since $\overline{K^{+\vare}}\Subset W$ and $F=G$ on $W$, there is a sequence $(\omega_n)\in\Delta$ such that $\Knorm{((f_n-g_n)\ast\omega_k)}{\overline{K^{+\vare}}}\to 0$ for each $k$, as $n\to\infty$.  Note, $(f_n\ast\delta_m-g_n\ast\delta_m)\ast\omega_k=((f_n-g_n)\ast\delta_m)\ast\omega_k$ is defined on $K$ since $(f_n-g_n)\ast\omega_k$ is defined on $\overline{K^{+\vare}}$ and  $\vare>s(\delta_1)\geq s(\delta_i)$ for all $i$. So $\Knorm{(f_n-g_n)\ast\delta_m\ast\omega_k}{K}\to 0$ by Lemma \ref{cc}.  So $(f_m^\ast)\sim(f_n\ast\delta_m)\sim(g_n\ast\delta_m)\sim(g_m^\ast)$.  Hence $(f_m^\ast)\sim(g_m^\ast)$ for each $m$.  However, both are constant sequences, so $f_m^\ast=g_m^\ast$ for every $m$.
\end{proof}

\begin{lemm}\label{simply}
Suppose $(f_n)$ is a fundamental sequence on an open set $U\subseteq\rn$ and $(g_n)$ is a sequence of continuous functions on the same set. Assume that for every $K\Subset U$ there exists $(\phi_n)\in\Delta$ such that for each $m\in\nats$, $(f_n-g_n)\ast\phi_m\toto 0$ on $K$ as $n\to\infty$. Then $(g_n)$ is fundamental on $U$ and $(g_n)\sim(f_n)$.
\end{lemm}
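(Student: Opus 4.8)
The plan is to establish the two conclusions separately, noting at the outset that the second is almost free once the first is in place. Indeed, the relation $(g_n)\sim(f_n)$ is \emph{by definition} exactly the hypothesis of the lemma (for every $K\Subset U$ there is $(\phi_n)\in\Delta$ with $(f_n-g_n)\ast\phi_m\toto0$ on $K$ for each $m$), so the only genuine content is to show that $(g_n)$ belongs to $\mathscr{A}(U)$. Once $(g_n)$ is known to be fundamental, the definition of $\sim$ applies verbatim and gives $(f_n)\sim(g_n)$.

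To prove that $(g_n)$ is fundamental, I would fix $K\Subset U$, choose $\vare>0$ with $K_1=\overline{K^{+\vare}}\Subset U$, and combine the two delta sequences available on $K_1$. Applying the definition of fundamentality to $(f_n)$ on $K_1$ yields $(\delta_n)\in\Delta$ with $(f_n\ast\delta_m)$ uniformly convergent on $K_1$ for each $m$, while the hypothesis applied on $K_1$ yields $(\phi_n)\in\Delta$ with $(f_n-g_n)\ast\phi_m\toto0$ on $K_1$ for each $m$. I then set $\theta_n=\delta_n\ast\phi_n$ and claim $(\theta_n)$ is a suitable delta sequence for $(g_n)$ on $K$. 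That $(\theta_n)\in\Delta$ follows because a convolution of two members of $\mathscr{D}_o$ is again in $\mathscr{D}_o$, together with the support bound $s(\theta_n)\le s(\delta_n)+s(\phi_n)\to0$ (the same estimate underlying Theorem \ref{infiniteconv}).

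The heart of the argument is the decomposition, for each fixed $m$,
\[
g_n\ast\theta_m=(f_n\ast\delta_m)\ast\phi_m-\bigl((f_n-g_n)\ast\phi_m\bigr)\ast\delta_m,
\]
obtained by writing $g_n=f_n-(f_n-g_n)$ and using commutativity and associativity of convolution. For the first term, uniform convergence of $(f_n\ast\delta_m)$ on $K_1$ makes it uniformly Cauchy there, so applying Corollary \ref{cc} to the differences $f_n\ast\delta_m-f_{n'}\ast\delta_m$ shows that $(f_n\ast\delta_m)\ast\phi_m$ is uniformly Cauchy, hence uniformly convergent, on $K$. For the second term, $(f_n-g_n)\ast\phi_m\toto0$ on $K_1$ gives, again by Corollary \ref{cc}, that $\bigl((f_n-g_n)\ast\phi_m\bigr)\ast\delta_m\toto0$ on $K$. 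Subtracting, $(g_n\ast\theta_m)$ converges uniformly on $K$ for every $m$, so $(g_n)\in\mathscr{A}(U)$.

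The step I expect to require the most care is the bookkeeping of supports and domains. For small $m$ the support $s(\theta_m)$ may be too large for $g_n\ast\theta_m$ to be defined on $K$ at all, but since $s(\theta_m)\to0$ this can fail for only finitely many $m$ and is absorbed by the standing convention of the paper; both applications of Corollary \ref{cc} likewise require $m$ large enough that $s(\phi_m)<\vare$ and $s(\delta_m)<\vare$. Choosing $K_1=\overline{K^{+\vare}}$ strictly larger than $K$ is precisely what supplies the room those two applications consume when convolution shrinks the domain, so that uniform convergence is still delivered on all of $K$.
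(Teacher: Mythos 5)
Your proposal is correct and follows essentially the same route as the paper: both form the combined delta sequence $\delta_n\ast\phi_n$ and control $g_n\ast(\delta_m\ast\phi_m)$ by splitting $g_n$ into $f_n$ and $f_n-g_n$, with Corollary \ref{cc} handling each piece. The paper phrases this as a three-term triangle inequality for the uniform Cauchy criterion on $(g_n-g_m)\ast\varphi_k\ast\delta_k$, which is just a repackaging of your two-term decomposition; your extra care with $K_1=\overline{K^{+\vare}}$ is sound but the paper absorbs it into its standing convention.
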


\begin{proof}
Choose $K\Subset U$.  As $(f_n)$ is fundamental, there exists $(\delta_n)\in\Delta$ such that $\Knorm{(f_n-f_m)\ast\delta_k}{K}\to 0$ for all $k\in\nats$ as $n,m\to\infty$.  Since there exists $(\varphi_n)\in\Delta$ such that  $\Knorm{(f_n-g_n)\ast\phi_k}{K}\to 0$ for each $k$, for any $k$ we have the following:
\begin{align*}
\Knorm{(g_n-g_m)\ast\varphi_k\ast\delta_k}{K}\leq &\Knorm{(g_n-f_n)\ast\varphi_k\ast\delta_k}{K}+\Knorm{(f_n-f_m)\ast\varphi_k\ast\delta_k}{K}\\
&\quad +\Knorm{(f_m-g_m)\ast\varphi_k\ast\delta_k}{K}.\\
\end{align*}
Each term of the above sum goes to zero by Lemma \ref{cc}, hence $(g_n)$ is fundamental on $U$. Clearly, $(g_n)\sim(f_n)$.
\end{proof}

\begin{lemm}\label{contex}
Let $U\subseteq\rn$ be open and let $(U_{n})$ be an increasing sequence of open sets such that
$\bigcup_{n\in\nats}U_{n}=U$ and $\overline{U_n} \Subset U_{n+1}$ for each $n\in \mathbb{N}$. If $F\in\boehms{U}$, then there exists $(\varphi_n)\in\Delta$ such that $F\ast\varphi_n\in\cfuncs{\overline{U_n}}$ for each $n\in \mathbb{N}$. If $(g_n)$ is any sequence of continuous extensions of $(F\ast\varphi_n)$  to $U$, then
$(g_n)$ is fundamental and $\left[(g_n)\right] = F$.
\end{lemm}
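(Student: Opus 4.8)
The plan is to prove the two assertions in turn: first the existence of the delta sequence $(\varphi_n)$, then the fact that any sequence $(g_n)$ of continuous extensions is fundamental and represents $F$. To construct $(\varphi_n)$, I note that each $\overline{U_n}$ is compact with $\overline{U_n}\Subset U$, so $d(\overline{U_n},U^\complement)>0$, and I choose $\vare_n>0$ with $\vare_n<\min\{1/n,\,d(\overline{U_n},U^\complement)\}$; this forces $\overline{U_n}\subseteq U^{-\vare_n}$ and $\vare_n\to 0$. Applying Lemma \ref{findseq} (taking the support control $s(\varphi_n)<\vare_n$ from its proof, exactly as is done in the proof of Lemma \ref{sameoninter}) yields $\varphi_n\in\mathscr{D}_o$ with $F\ast\varphi_n\in\cfuncs{U^{-\vare_n}}\subseteq\cfuncs{\overline{U_n}}$. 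Since $s(\varphi_n)<\vare_n\to 0$, the sequence $(\varphi_n)$ lies in $\Delta$.

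For the second assertion I fix $(f_k)\in F$ and invoke Lemma \ref{simply}: it is enough to show that for every $K\Subset U$ there is a delta sequence $(\delta_m)$ with $(f_n-g_n)\ast\delta_m\toto 0$ on $K$ as $n\to\infty$, for each $m$. So I fix $K$, choose $\vare>0$ with $\overline{K^{+\vare}}\Subset U$, and use that $(U_n)$ is an increasing open cover of $U$ to extract, by compactness, an $N$ with $\overline{K^{+\vare}}\subseteq U_N$; then $g_n=F\ast\varphi_n$ on $\overline{K^{+\vare}}$ for all $n\ge N$, since $g_n$ restricts to $F\ast\varphi_n$ on $\overline{U_n}$. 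Because $(f_k)$ is fundamental, I obtain $(\delta_m)\in\Delta$ with $(f_k\ast\delta_m)\toto h_m$ uniformly on $\overline{K^{+\vare}}$ as $k\to\infty$; discarding finitely many initial terms I may assume $s(\delta_m)<\vare$ for all $m$. A short argument from uniform convergence together with Corollary \ref{cc} gives $(f_k\ast\delta_m)\sim(h_m)$, i.e. $F\ast\delta_m=h_m$ on $K^{+\vare}$.

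The heart of the argument is the identity $g_n\ast\delta_m=h_m\ast\varphi_n$ on $K$ for $n\ge N$. I would establish it at the level of representing sequences: $(F\ast\varphi_n)\ast\delta_m=[(f_k\ast\varphi_n\ast\delta_m)]=[(f_k\ast\delta_m\ast\varphi_n)]=(F\ast\delta_m)\ast\varphi_n$, since ordinary convolution commutes term by term. On $K$ the left-hand side equals $g_n\ast\delta_m$ (because $g_n=F\ast\varphi_n$ on $\overline{K^{+\vare}}$ and $s(\delta_m)<\vare$), while the right-hand side equals $h_m\ast\varphi_n$ (because $F\ast\delta_m=h_m$ near $K$). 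Hence, for $n\ge N$, $(f_n-g_n)\ast\delta_m=f_n\ast\delta_m-h_m\ast\varphi_n$ on $K$. Letting $n\to\infty$, $f_n\ast\delta_m\toto h_m$ on $\overline{K^{+\vare}}$ by the very definition of $h_m$, and $h_m\ast\varphi_n\toto h_m$ on $K$ by Lemma \ref{samelimit} (applied with $h_m\in\cfuncs{K^{+\vare}}$, $K\Subset K^{+\vare}$, and $(\varphi_n)\in\Delta$). Thus $(f_n-g_n)\ast\delta_m\toto 0$ on $K$ for each $m$, and Lemma \ref{simply} gives that $(g_n)$ is fundamental with $(g_n)\sim(f_n)$, so $[(g_n)]=[(f_n)]=F$.

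The step I expect to be the main obstacle is precisely the identity $g_n\ast\delta_m=h_m\ast\varphi_n$: it requires passing correctly between the Boehmian convolutions $F\ast\varphi_n$, $F\ast\delta_m$ and their continuous representatives $g_n$, $h_m$, commuting the two convolution factors, and checking that the Boehmian convolution agrees with the ordinary function convolution on $K$. Once this bookkeeping is in place, the conclusion follows from only the two limit lemmas \ref{samelimit} and \ref{cc}.
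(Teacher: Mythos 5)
Your proposal is correct and follows essentially the same route as the paper: construct $(\varphi_n)$ via Lemma \ref{findseq}, reduce to Lemma \ref{simply}, and for each $K\Subset U$ use the commutation $g_n\ast\delta_m=(F\ast\delta_m)\ast\varphi_n=h_m\ast\varphi_n$ together with Lemma \ref{samelimit} and the uniform convergence $f_n\ast\delta_m\toto h_m$ to get $(f_n-g_n)\ast\delta_m\toto 0$ on $K$. The only differences are cosmetic (you work on $\overline{K^{+\vare}}\subseteq U_N$ where the paper uses $K^{+\vare}\subset U_m$, and you rederive $h_m=F\ast\delta_m$ rather than citing Lemma \ref{funccor}).
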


\begin{proof}
There exists $(\varphi_n)\in\Delta$ such that $F\ast\varphi_n\in\cfuncs{\overline{U_n}}$ for each $n\in \mathbb{N}$ by Lemma \ref{findseq}. Let $(g_n)$ be a sequence of continuous extensions of $(F\ast\varphi_n)$  to $U$ and let $F=[(f_n)]$.  If $K\Subset U$, then there is an $m\in\mathbb{N}$ such that $K\subset U_m$.  Note that $g_n=F*\varphi_n$ on $U_{m+1}$ for every $n>m$.  Moreover, there is $(\delta_n)\in\Delta$ and functions $h_k\in\cfuncs{\overline{U_{m}}}$ such that, for every $k$, $f_n\ast\delta_k \toto h_k$ on $\overline{U_{m}}$ as $n\to\infty$.  By Lemma \ref{funccor}, $h_k=F\ast\delta_k$ on $\overline{U_{m}}$. Let $\vare >0$ be such that $K^{+\vare} \subset U_{m}$. Now, for all $n>m$ and all $k$ such that $s(\delta_k)<\vare$ we have
\begin{align*}
\Knorm{(g_n-f_n)\ast\delta_k}{K}&=\Knorm{g_n\ast\delta_k-f_n\ast\delta_k}{K}\\
&\leq\Knorm{F\ast\delta_k\ast\varphi_n-h_k}{K}+\Knorm{h_k-f_n\ast\delta_k}{K}\\
&=\Knorm{h_k\ast\varphi_n-h_k}{K}+\Knorm{h_k-f_n\ast\delta_k}{K}\to 0.\\
\end{align*}
Hence, by Lemma \ref{simply}, $(g_n)$ is fundamental on $U$ and $(g_n)\sim (f_n)$.
\end{proof}

\begin{thm}\label{glue2}
Let $U$ and $V$ be open sets in $\rn$ with a non-empty intersection $W$.  If $F\in\boehms{U}$, $G\in\boehms{V}$, and $F=G$ on $W$, then there is a Boehmian $H\in\boehms{U\cup V}$ such that $H\vert_U=F$ and $H\vert_V=G$.
\end{thm}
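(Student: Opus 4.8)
The plan is to exhibit an explicit representative of $H$ by selecting mutually compatible continuous representatives of $F$ and $G$ and gluing them along the overlap $W$.

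First I would fix compact exhaustions $(P_n)$ of $U$ and $(Q_n)$ of $V$ with $\overline{P_n}\Subset P_{n+1}$, $\overline{Q_n}\Subset Q_{n+1}$, $\bigcup_n P_n=U$, $\bigcup_n Q_n=V$, and pick $\varepsilon_n\to 0$ small enough that $\overline{P_n}\subseteq U^{-\varepsilon_n}$ and $\overline{Q_n}\subseteq V^{-\varepsilon_n}$ (any $\varepsilon_n$ below $\min\{1/n,\,d(\overline{P_n},U^\complement),\,d(\overline{Q_n},V^\complement)\}$ will do). For each fixed $n$ I would apply Lemma~\ref{sameoninter} with $\varepsilon=\varepsilon_n$ and keep a single term $\delta_n$ of the delta sequence it produces, chosen with $s(\delta_n)<\varepsilon_n$ (possible since the supports of those terms shrink to $0$). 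Then $(\delta_n)\in\Delta$, and for every $n$ one has $F\ast\delta_n\in\mathscr{C}(U^{-\varepsilon_n})$, $G\ast\delta_n\in\mathscr{C}(V^{-\varepsilon_n})$, and $F\ast\delta_n=G\ast\delta_n$ on $W^{-\varepsilon_n}=U^{-\varepsilon_n}\cap V^{-\varepsilon_n}$.

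Next comes the gluing. Because $\overline{P_n}\cap\overline{Q_n}\subseteq U^{-\varepsilon_n}\cap V^{-\varepsilon_n}=W^{-\varepsilon_n}$, the functions $F\ast\delta_n$ on $\overline{P_n}$ and $G\ast\delta_n$ on $\overline{Q_n}$ agree wherever both are defined, so together they form a single continuous function on the compact set $\overline{P_n}\cup\overline{Q_n}$; I would extend it by the Tietze theorem to some $h_n\in\mathscr{C}(U\cup V)$ and set $H=[(h_n)]$. To recover the restrictions, observe that $h_n=F\ast\delta_n$ on $\overline{P_n}$, so $(h_n|_U)$ is a sequence of continuous extensions of $(F\ast\delta_n)$ associated with the exhaustion $(P_n)$; Lemma~\ref{contex} (whose proof applies verbatim to any delta sequence $(\varphi_n)$ with $F\ast\varphi_n\in\mathscr{C}(\overline{P_n})$) then yields that $(h_n|_U)$ is fundamental on $U$ with $[(h_n|_U)]=F$, that is $H|_U=F$; the symmetric argument with $(Q_n)$ gives $H|_V=G$.

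The step I expect to be the main obstacle is verifying that $(h_n)$ is fundamental on $U\cup V$, since a compact $K\Subset U\cup V$ typically straddles the seam and is governed by neither $F$ nor $G$ individually. I would handle it by the standard decomposition $K=K_U\cup K_V$ with $K_U\Subset U$ and $K_V\Subset V$ compact. Having already shown that $(h_n|_U)$ and $(h_n|_V)$ are fundamental, I would choose $(\beta_m)\in\Delta$ witnessing uniform convergence of $(h_n\ast\beta_m)$ on a compact neighborhood of $K_U$ and $(\gamma_m)\in\Delta$ doing the same for $K_V$, and put $\rho_m=\beta_m\ast\gamma_m$. Writing $h_n\ast\rho_m=(h_n\ast\beta_m)\ast\gamma_m$ over $K_U$ and $h_n\ast\rho_m=(h_n\ast\gamma_m)\ast\beta_m$ over $K_V$, Corollary~\ref{cc} promotes each one-sided uniform convergence through the extra convolution, so $(h_n\ast\rho_m)$ converges uniformly on $K_U\cup K_V=K$ for every $m$. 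This shows $(h_n)\in\mathscr{A}(U\cup V)$, completing the construction. The only ingredient not already packaged in the earlier lemmas is this splitting argument, together with the crucial fact — furnished by Lemma~\ref{sameoninter} through Lemma~\ref{commute} — that the continuous representatives of $F$ and $G$ can be forced to coincide exactly on the overlap, which is precisely what makes the gluing legitimate.
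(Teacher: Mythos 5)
Your proposal is correct and follows essentially the same route as the paper: invoke Lemma \ref{sameoninter} to produce a delta sequence $(\delta_n)$ whose convolutions with $F$ and $G$ are continuous and agree on the overlap, glue and continuously extend to get $(h_n)$, prove fundamentality by splitting $K=K_U\cup K_V$ and convolving the two witnessing delta sequences so that Corollary \ref{cc} upgrades each one-sided convergence, and recover $H\vert_U=F$, $H\vert_V=G$ via Lemma \ref{contex}. The only differences (gluing over a compact exhaustion $\overline{P_n}\cup\overline{Q_n}$ rather than over $U^{-\vare_n}\cup V^{-\vare_n}$, and citing Tietze explicitly) are cosmetic.
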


\begin{proof}
  By Lemma \ref{sameoninter}, there exist $\vare_n\to 0$ and $(\delta_n)\in\Delta$ such that, for each $n\in\mathbb{N}$, we have $s(\delta_n)<\vare_n$, $F\ast\delta_n\in\cfuncs{U^{-\vare_{n+1}}}$, $G\ast\delta_n\in\cfuncs{V^{-\vare_{n+1}}}$, and $F\ast\delta_n=G\ast\delta_n$ on $U^{-\vare_{n+1}}\cap V^{-\vare_{n+1}}=W^{-\vare_{n+1}}$. Without loss of generality, we can assume that both $(\vare_n)$ and $s(\delta_n)$ are decreasing sequences.   Define a sequence of functions $(h_n)$ on $U^{-\vare_n} \cup V^{-\vare_n}$ by:
\begin{displaymath}
   h_n(x) = \left\{
     \begin{array}{lr}
       F\ast\delta_n & \text{if } x \in U^{-\vare_n}\\
       G\ast\delta_n & \text{if } x \in V^{-\vare_n}
     \end{array}
   \right.
\end{displaymath}
Let $(\tilde{h}_n)$ be the sequence of continuous extensions of $(h_n)$ from $U^{-\vare_n}\cup V^{-\vare_n}$ to $U\cup V$.
First we show that $(\tilde{h}_n)$ is a fundamental sequence on $U\cup V$.  Let $K\Subset U\cup V$ and let $n_0\in\nats$ be such that $K\Subset U^{-\vare_n}\cup V^{-\vare_n}$ for $n>n_0$.  Let $K_1$ and $K_2$ be compact subsets of $U\cup V$ such that $K_1\Subset U^{-\vare_n}$, $K_2\Subset V^{-\vare_n}$, and $K_1\cup K_2 = K$. For each $n>n_0$, let $f_n$ be a continuous extension of $F\ast \delta_n$ from $U^{-\vare_n}$ to  $U$ and let $g_n$ be a continuous extension of $G\ast \delta_n$ from $V^{-\vare_n}$ to $V$. Note that, by Lemma \ref{contex}, $(f_n)$ is a fundamental sequence on $U$ and $(g_n)$ is fundamental sequence on $V$. Let $\eta_1=d(K_1,U^\complement)$ and $\eta_2=d(K_2,V^\complement)$.    There exists a sequence $(\varphi_n)\in\Delta$ such that, for each $m$, $(f_n\ast\varphi_m)$ is uniformly convergent on $\overline{K_1+B_{\frac{\eta_1}{2}}}$ as $n\to\infty$.  Similarly, there exists $(\psi_n)\in\Delta$ such that, for each $m$, $(g_n\ast\psi_m)$ is uniformly convergent on $\overline{K_2+B_{\frac{\eta_2}{2}}}$.  Then, by Lemma \ref{cc}, the sequences $(f_n\ast\varphi_m\ast\psi_m)$ and $(g_n\ast\psi_m\ast\varphi_m)$  are uniformly convergent on $K_1$ and $K_2$, respectively.  Since, for sufficiently large $m$, $\tilde{h}_n\ast\varphi_m\ast\psi_m=f_n\ast \varphi_m\ast\psi_m$ on $K_1$ and $\tilde{h}_n\ast\psi_m\ast\varphi_m=g_n \ast \psi_m\ast\varphi_m$ on $K_2$, $\tilde{h}_n\ast \psi_m\ast\varphi_m$ is uniformly convergent on $K$.  Conseqently,  $(\tilde{h}_n)$ is a fundamental sequence on $U\cup V$.  Let $H=[(\tilde{h}_n)]$.

If $K\Subset U$, then there exists $n_1$ such that $K\subset U^{-\vare_n}\cup V^{-\vare_n}$ for all $n>n_1$.  Then, for $n>n_1$, $\tilde{h}_n=f_n$ on $K$.  By Lemma 10, $[(\tilde{h}_n\vert_U)]=F$. Similarly, $H\vert_V=G$.
\end{proof}

\begin{cor}
Suppose $\{U_i\}_{i=1}^{n}$ is a finite collection of open sets in $\rn$ and $\{F_i\}_{i=1}^{n}$ is a collection of Boehmians such that $F_i\in\boehms{U_i}$ and $F_i\vert_{U_i\cap U_j}=F_j\vert_{U_i\cap U_j}$.  If we define $U=\cup_{i=1}^{n} U_i$ then there is a Boehmian $F\in\boehms{U}$ such that $F\vert_{U_i}=F_i$ for $i=1, \dots , n$.
\end{cor}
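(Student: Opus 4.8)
The approach is induction on the number $n$ of open sets, using Theorem \ref{glue2} as the engine. For $n=1$ there is nothing to prove, and $n=2$ is exactly Theorem \ref{glue2}. Assume the statement for collections of $n-1$ sets. Given $\{U_i\}_{i=1}^n$ and $\{F_i\}_{i=1}^n$ as in the hypothesis, set $U'=\bigcup_{i=1}^{n-1}U_i$. The subcollections $\{U_i\}_{i=1}^{n-1}$, $\{F_i\}_{i=1}^{n-1}$ still satisfy the pairwise compatibility condition, so the inductive hypothesis yields $F'\in\boehms{U'}$ with $F'\vert_{U_i}=F_i$ for every $i<n$. I then want to glue $F'$ and $F_n$ across $W:=U'\cap U_n$ by a single application of Theorem \ref{glue2}, producing a Boehmian (call it $F$) on $U'\cup U_n=U$; transitivity of restriction then gives $F\vert_{U_i}=(F\vert_{U'})\vert_{U_i}=F'\vert_{U_i}=F_i$ for $i<n$ and $F\vert_{U_n}=F_n$, finishing the induction.

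The one hypothesis of Theorem \ref{glue2} that is not automatic is that $F'$ and $F_n$ agree on $W$. Transitivity of restriction gives agreement only piece by piece: since $W=\bigcup_{i=1}^{n-1}(U_i\cap U_n)$ and $F'\vert_{U_i}=F_i$, we have $F'\vert_{U_i\cap U_n}=F_i\vert_{U_i\cap U_n}=F_n\vert_{U_i\cap U_n}$, the last equality by hypothesis. Thus, setting $D:=F'-F_n\in\boehms{W}$ (using the vector space structure on $\boehms{\cdot}$ and the linearity of restriction), $D$ vanishes on every member of the open cover $\{U_i\cap U_n\}_{i=1}^{n-1}$ of $W$. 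The crux of the argument — and the step I expect to be the main obstacle — is the local-to-global, or separation, statement: a Boehmian vanishing on each set of an open cover must vanish on the union. This is genuinely nontrivial because the equivalence relation defining $\boehms{\cdot}$ is tested compact set by compact set with delta sequences that may vary from one set to the next.

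To establish this locality for $D$, I would argue as follows. Fix $K\Subset W$. Since $K$ is compact and the $U_i\cap U_n$ are open, there is $\vare_0>0$ with $K\subseteq\bigcup_{i=1}^{n-1}(U_i\cap U_n)^{-\vare}$ for all $0<\vare\le\vare_0$ and $\overline{K^{+\vare_0}}\Subset W$. For each $m$, Lemma \ref{findseq} provides $\varphi_m\in\mathscr{D}_o$ with $s(\varphi_m)<\min(\vare_0,\tfrac1m)$ such that $D\ast\varphi_m$ is a continuous function on $W^{-s(\varphi_m)}\supseteq K$; in particular $(\varphi_m)\in\Delta$. By Lemma \ref{commute}, on each $(U_i\cap U_n)^{-s(\varphi_m)}$ we have $D\ast\varphi_m=(D\vert_{U_i\cap U_n})\ast\varphi_m=0$, and since these sets cover $K$ the continuous function $D\ast\varphi_m$ is identically $0$ on $K$. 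Unwinding ``$D\ast\varphi_m=0$ on $K$'' through the definition of the zero Boehmian supplies, for each $m$, a delta sequence $(\theta^{(m)}_l)_l$ with $d_n\ast\varphi_m\ast\theta^{(m)}_l\toto 0$ on $K$ as $n\to\infty$, where $(d_n)\in D$. A diagonal choice $\delta_m:=\varphi_m\ast\theta^{(m)}_{l(m)}$, with $l(m)$ chosen so that $s(\theta^{(m)}_{l(m)})<\tfrac1m$, satisfies $s(\delta_m)\le s(\varphi_m)+s(\theta^{(m)}_{l(m)})\to 0$, so $(\delta_m)\in\Delta$, while for each fixed $k$ the sequence $d_n\ast\delta_k=d_n\ast\varphi_k\ast\theta^{(k)}_{l(k)}\toto 0$ on $K$. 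Hence $(d_n)\sim 0$ on $K$; as $K\Subset W$ was arbitrary, $D=0$, i.e.\ $F'\vert_W=F_n\vert_W$.

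With the overlap condition verified, Theorem \ref{glue2} applies to $F'$ and $F_n$ and, as noted in the first paragraph, the resulting $F$ restricts correctly to every $U_i$, completing the induction. The only place where real work is needed is the separation step; an alternative to the convolution-to-functions route above is to cover $K$ by finitely many $U_{i_j}\cap U_n$, choose subordinate compacta $K_j$ with $\bigcup_j K_j=K$, take local witnessing delta sequences and form their convolution product, invoking Corollary \ref{cc} — but one must then manage the domain shrinkage under convolution, which is exactly what the diagonal argument above sidesteps.
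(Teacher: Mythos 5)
Your induction is correct and is almost certainly the argument the paper intends: the corollary is stated without proof as an immediate consequence of Theorem \ref{glue2}. You are also right that the one non-trivial point is the overlap condition $F'\vert_W=F_n\vert_W$ on $W=U'\cap U_n$, which does not follow formally from the pairwise hypotheses and transitivity of restriction, but requires a separation (local-to-global vanishing) property. That property is exactly the paper's Lemma \ref{prop1}, stated and proved in the next section as sheaf axiom (a); its proof does not depend on this corollary, so citing it would not be circular, and it disposes of your $D=F'-F_n$ in one line. Where you genuinely diverge is in how you establish the separation statement: the paper covers $K$ by finitely many compacta $K_j\Subset U_{\alpha_j}$, takes a witnessing delta sequence for each $j$, and convolves all of them together, controlling supports so that Corollary \ref{cc} applies on each $K_j$; you instead convolve $D$ down to a genuine continuous function via Lemma \ref{findseq}, observe by Lemma \ref{commute} that this function vanishes on a neighbourhood of $K$, and then diagonalize over $m$. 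Both work; your route trades the support bookkeeping of the finite convolution product for the already-proved machinery of Lemmas \ref{findseq} and \ref{commute}, and it does not need the cover of $K$ to be refined to finitely many pieces. Two small points to tidy: Theorem \ref{glue2} as stated assumes $W\neq\emptyset$, so the case $U'\cap U_n=\emptyset$ needs a separate (trivial) remark; and Lemma \ref{findseq} gives continuity of $D\ast\varphi_m$ on $W^{-\vare}$ for a prescribed $\vare>s(\varphi_m)$ rather than on $W^{-s(\varphi_m)}$, so you should fix $\vare<\vare_0$ in advance with $K\Subset W^{-\vare}$ --- neither issue affects the substance of the argument.
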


\section{A SHEAF OF BOEHMIANS} \label{Ch5}

\indent With Theorem \ref{glue2} in hand, it can be shown that Boehmians are a sheaf.  In general, a sheaf is a way of organizing data over open sets of a topological space.  It is a construction which has found useful applications in complex variables, algebraic geometry, and algebraic and differential topology \cite{sheaf}.  To say that some set can be considered a sheaf over a topological space, it must first be a presheaf.  That is to say that the set must have a well defined idea of local behavior and restriction.  For the space of Boehmians $\boehms{\rn}$, it has been done in \cite{open}.  More explicitly:

\begin{thm}[Presheaf]
The set of Boehmians $\boehms{\rn}$ constitutes a presheaf over $\rn$, that is
\begin{itemize}
\item[(i)] For each open $U\subseteq \rn$, there exists the space of Boehmians $\boehms{U}$ associated to $U$ and
\item[(ii)] For all open $U,V\subseteq \rn$, if $V\subseteq U$, then there there exist restriction maps $\rho_{V,U}:\boehms{U}\to \boehms{V}$ such that $\rho_{V,V}:\boehms{V}\to\boehms{V}$ is the identity map, and for open $W\subseteq V\subseteq U$, $\rho_{W,V}\circ\rho_{V,U}=\rho_{W,U}$.
\end{itemize}
\end{thm}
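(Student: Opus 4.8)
The plan is to observe that part (i) requires no new work: for each open $U\subseteq\rn$ the space $\boehms{U}=\mathscr{A}(U)/\sim$ was constructed in Section \ref{Ch3}, and Lemma \ref{equiv} guarantees that $\sim$ is an equivalence relation, so $\boehms{U}$ is a well-defined set (indeed a real vector space). The content of the theorem is therefore concentrated in part (ii). I would define the restriction map by
$$
\rho_{V,U}\bigl([(f_n)]\bigr)=[(f_n\vert_V)]=F\vert_V
$$
for $F=[(f_n)]\in\boehms{U}$ and open $V\subseteq U$, exactly the restriction introduced at the start of Section \ref{Ch4}, and then verify the three required properties: that $\rho_{V,U}$ is a well-defined map $\boehms{U}\to\boehms{V}$, that $\rho_{V,V}$ is the identity, and that restriction is transitive.

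First I would establish well-definedness, which is the only step carrying real technical weight. Here the key observation is that every $K\Subset V$ is also a compact subset of $U$, so the ``for every $K\Subset U$'' clauses in the definitions of $\mathscr{A}(U)$ and of $\sim$ immediately supply delta sequences adapted to $K$. To transfer them to $V$ I would fix $K\Subset V$, choose $\vare>0$ with $\overline{K^{+\vare}}\Subset V$, and note that whenever $s(\delta_m)<\vare$ the convolution $(f_n\vert_V)\ast\delta_m$ agrees with $(f_n)\ast\delta_m$ on $K$, since the integral defining the convolution at a point of $K$ only samples $f_n$ on $\overline{K^{+\vare}}\subseteq V$. Using the paper's standing convention of discarding the finitely many indices $m$ (those with $s(\delta_m)\geq\vare$) for which the convolution fails to be defined on $K$, this shows $(f_n\vert_V)\in\mathscr{A}(V)$ whenever $(f_n)\in\mathscr{A}(U)$; the same comparison applied to $(f_n-g_n)$ shows $(f_n\vert_V)\sim(g_n\vert_V)$ in $\mathscr{A}(V)$ whenever $(f_n)\sim(g_n)$ in $\mathscr{A}(U)$. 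Hence $\rho_{V,U}$ does not depend on the chosen representative and lands in $\boehms{V}$.

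The two functoriality axioms are then immediate from the pointwise nature of restriction. For $V=U$ we have $f_n\vert_V=f_n$ for every $n$, so $\rho_{V,V}([(f_n)])=[(f_n)]$ is the identity. For open sets $W\subseteq V\subseteq U$ and $F=[(f_n)]\in\boehms{U}$, the identity $(f_n\vert_V)\vert_W=f_n\vert_W$ of ordinary function restriction yields $\rho_{W,V}\bigl(\rho_{V,U}(F)\bigr)=[((f_n\vert_V)\vert_W)]=[(f_n\vert_W)]=\rho_{W,U}(F)$, which is the composition law. I expect no obstacle in these last two steps; the one place demanding care is the well-definedness, and specifically the bookkeeping ensuring that $(f_n\vert_V)\ast\delta_m$ and $(f_n)\ast\delta_m$ coincide on the relevant compacta — a matter of keeping $s(\delta_m)$ small relative to $d(K,V^\complement)$ and invoking the paper's convention about finitely many undefined terms.
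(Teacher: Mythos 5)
Your proposal is correct and follows exactly the route the paper intends: it uses the restriction $F\vert_V=[(f_n\vert_V)]$ introduced at the start of Section \ref{Ch4} (whose well-definedness the paper simply declares ``clear,'' deferring the presheaf statement itself to \cite{open}). Your argument merely fills in those omitted details --- shrinking $s(\delta_m)$ below $d(K,V^\complement)$ so that $(f_n\vert_V)\ast\delta_m$ and $f_n\ast\delta_m$ agree on $K$ --- and does so correctly.
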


The reader familiar with category theory will notice that a presheaf is nothing more than a contravariant functor from the category of open sets over a space (with inclusion morphisms) to another appropriately chosen category where the morphisms are restriction maps.  If we have the presheaf property, then we can say that the Boehmians are a sheaf given certain other conditions.  The following are the conditions for being a sheaf, as explicated in \cite{sheaf}:
\begin{itemize}
\item[(a)] Let $\{U_\alpha\}$ be a family of open sets in $\rn$ and $U=\bigcup U_\alpha$.  If $F,G\in\boehms{U}$ are such that $F\vert_{U_\alpha}=G\vert_{U_\alpha}$ for each $\alpha$, then $F=G$.
\item[(b)] Let $\{U_\alpha\}$ be a family of open sets in $\rn$ and $U=\bigcup U_\alpha$. If there are Boehmians $F_\alpha\in\boehms{U_\alpha}$ such that $F_\alpha\vert_{U_\alpha\cap U_\beta}=F_\beta\vert_{U_\alpha\cap U_\beta}$ for every $\alpha,\beta$, then there exists $F\in\boehms{U}$ such that $F\vert_{U_\alpha}=F_\alpha$.
\end{itemize}

We will show that Boehmians satisfy these conditions. For (a) we will use the following lemma.
\begin{lemm}\label{prop1}
Let $\{U_\alpha\}$ be a family of open sets in $\rn$ and $U=\bigcup U_\alpha$.  If $(f_n)\sim(0)$ on $U_\alpha$ for each $\alpha$ then $(f_n)\sim(0)$ on $U$.
\end{lemm}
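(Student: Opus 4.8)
The plan is to verify the definition of $\sim$ directly. Fix a compact set $K\Subset U$; I must produce a single delta sequence $(\delta_n)\in\Delta$ such that $f_n\ast\delta_m\toto 0$ on $K$ for every $m$. The essential tool is the compactness of $K$: since the $U_\alpha$ are open and cover $K$, finitely many of them, say $U_{\alpha_1},\dots,U_{\alpha_p}$, already cover $K$. This is what reduces the arbitrary (possibly infinite) family to a finite subfamily, so that the convolution-merging technique from Lemma \ref{equiv} can be brought to bear.

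Next I would break $K$ into pieces adapted to this finite cover. By a standard shrinking argument---covering each $x\in K$ by a closed ball contained in whichever $U_{\alpha_i}$ holds $x$, extracting a finite subcover, and grouping the balls by index---one obtains compact sets $K_1,\dots,K_p$ with $K_i\subseteq U_{\alpha_i}$ and $\bigcup_{i=1}^p K_i=K$. Because each $K_i\Subset U_{\alpha_i}$, I may fix $\rho_i>0$ with $L_i:=\overline{K_i^{+\rho_i}}\Subset U_{\alpha_i}$, reserving the collar $\rho_i$ for the support bookkeeping below.

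For each $i$, the hypothesis $(f_n)\sim(0)$ on $U_{\alpha_i}$, applied to the compact set $L_i$, yields $(\varphi^{(i)}_n)\in\Delta$ such that $f_n\ast\varphi^{(i)}_m\toto 0$ on $L_i$ for every $m$. I then fuse these finitely many delta sequences by convolution, setting $\delta_n=\varphi^{(1)}_n\ast\cdots\ast\varphi^{(p)}_n$; since $\mathscr{D}_o$ is closed under convolution and $s(\varphi\ast\psi)\le s(\varphi)+s(\psi)$, one checks $(\delta_n)\in\Delta$. Fixing $m$ and writing $\rho^{(i)}_m$ for the convolution $\varphi^{(1)}_m\ast\cdots\ast\varphi^{(p)}_m$ with the $i$-th factor omitted, commutativity of convolution gives $f_n\ast\delta_m=(f_n\ast\varphi^{(i)}_m)\ast\rho^{(i)}_m$. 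For $m$ large enough that $s(\rho^{(i)}_m)<\rho_i$, Corollary \ref{cc} converts $f_n\ast\varphi^{(i)}_m\toto 0$ on $L_i$ into $f_n\ast\delta_m\toto 0$ on $L_i^{-\rho_i}\supseteq K_i$. Ranging over $i=1,\dots,p$ yields convergence on all of $K=\bigcup_i K_i$, and the paper's convention of ignoring finitely many indices $m$ disposes of the small-$m$ case where $s(\rho^{(i)}_m)$ may be too large.

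I expect the main obstacle to be the simultaneous handling of all the pieces: the definition of $\sim$ supplies a \emph{separate} delta sequence on each $U_{\alpha_i}$, and these must be combined into one delta sequence that works on every $K_i$ at once. The convolution trick accomplishes the fusion, but at the cost of eroding supports through Corollary \ref{cc}, so the delicate point is choosing the enlargements $L_i=\overline{K_i^{+\rho_i}}$ in advance so that the erosion $L_i^{-\rho_i}$ still recovers $K_i$. The shrinking lemma producing the $K_i$ is routine but is the other step that requires care.
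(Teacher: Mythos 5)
Your proposal follows essentially the same route as the paper's proof: extract a finite subcover, split $K$ into compact pieces $K_i\Subset U_{\alpha_i}$, pull a delta sequence from the hypothesis on each piece, fuse them by convolution, and invoke Corollary \ref{cc}. Your explicit enlargement to $L_i=\overline{K_i^{+\rho_i}}$ before applying the hypothesis is in fact a slightly more careful treatment of the support erosion that the paper handles implicitly via the bound $s(\varphi^j_n)<\vare/i$, so the argument is correct.
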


\begin{proof}
Let $K\Subset U$. Then there are $U_{\alpha_1},\ldots, U_{\alpha_i}$ such that $K\Subset U_{\alpha_1}\cup\ldots\cup U_{\alpha_i}$.  Moreover, there are $K_1,\ldots, K_i$ such that $K_j\Subset U_{\alpha_j}$ for $1\leq j\leq i$ and $K=K_1\cup\ldots\cup K_i$.  Let $\vare=\min\{d(K_j,U_j^\complement):1\leq j\leq i\}$.  By the assumption, for each $j$ there exists $(\varphi_n^j)\in \Delta$ such that $s(\phi_n^j)<\frac{\vare}{i}$ for all $n$, and for every $m$, $\Knorm{f_n\ast\varphi_m^j}{K_j}\to 0$ as $n\to\infty$.  Let $\varphi_n=\varphi_n^1\ast\ldots\ast\varphi_n^i$.  From \cite{open}, we have that $(\varphi_n)\in\Delta$ and $s(\phi_n)<\vare$ for each $n$.  And, for every $m$ and every $K_j$ we have that $\Knorm{f_n\ast\varphi_m}{K_j}\to 0$ since $f_n\ast\varphi_m=(f_n\ast\varphi_m^j)\ast(\varphi_m^1\ldots\varphi_m^{j-1}\ast\varphi_m^{j+1}\ldots\varphi_m^i)$. So $(f_n)\sim(0)$ on $U$.
\end{proof}

\begin{lemm}\label{countable}
Let $\{U_n\}$ be a countable family of open sets in $\rn$ and let $U=\bigcup U_n$. If $F_n\in\boehms{U_n}$ and $F_m\vert_{U_m\cap U_n}=F_n\vert_{U_m\cap U_n}$ for every $m,n\in\nats$, then there exists $F\in\boehms{U}$ such that $F\vert_{U_n}=F_n$ for all $n\in\nats$.
\end{lemm}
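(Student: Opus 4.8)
The plan is to reduce the countable gluing to the two--set gluing of Theorem \ref{glue2} by building an increasing chain of partial gluings and then passing to a limit over an exhaustion of $U$. Throughout I will use the separation property---that two Boehmians on an open set which agree on every member of an open cover must coincide---which follows at once from Lemma \ref{prop1}: if $F\vert_{U_\alpha}=G\vert_{U_\alpha}$ for all $\alpha$ and $(f_n)\in F$, $(g_n)\in G$, then $(f_n-g_n)\sim(0)$ on each $U_\alpha$, hence on the union by Lemma \ref{prop1}, so $F=G$.

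First I would set $V_k=U_1\cup\cdots\cup U_k$, an increasing sequence of open sets with $\bigcup_k V_k=U$, and construct inductively Boehmians $H_k\in\boehms{V_k}$ with $H_k\vert_{U_i}=F_i$ for $i\le k$ and $H_{k+1}\vert_{V_k}=H_k$. Take $H_1=F_1$; given $H_k$, apply Theorem \ref{glue2} to $H_k\in\boehms{V_k}$ and $F_{k+1}\in\boehms{U_{k+1}}$. The hypothesis of that theorem is agreement on $V_k\cap U_{k+1}=\bigcup_{i\le k}(U_i\cap U_{k+1})$; on each piece $U_i\cap U_{k+1}$ both restrict to $F_i\vert_{U_i\cap U_{k+1}}=F_{k+1}\vert_{U_i\cap U_{k+1}}$ by the compatibility hypothesis, so they agree on the whole intersection by the separation property (when the intersection is empty the gluing is a trivial disjoint juxtaposition). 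This yields $H_{k+1}$ with the stated properties, and iterating $H_{k+1}\vert_{V_k}=H_k$ gives $H_k\vert_{V_j}=H_j$ for all $j\le k$.

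Next I would pass to the limit. Fix an exhaustion of $U$ by open sets $W_k$ with $\overline{W_k}\Subset W_{k+1}$ and $\bigcup_k W_k=U$; after passing to a subsequence of $(V_k)$, which preserves the chain relations and the union, I may assume $\overline{W_k}\Subset V_k$. By Lemma \ref{findseq} choose $\varphi_k\in\mathscr{D}_o$ with $s(\varphi_k)\to0$ and $H_k\ast\varphi_k\in\cfuncs{\overline{W_k}}$, and let $h_k\in\cfuncs{U}$ be a continuous extension. I claim $F:=[(h_k)]$ is the desired Boehmian. The key point is that any $K\Subset U$ satisfies $K\Subset V_j$ for some $j$, and for all large $k$ the equality $h_k=H_k\ast\varphi_k=H_j\ast\varphi_k$ holds on a neighbourhood of $K$, by $H_k\vert_{V_j}=H_j$ and Lemma \ref{commute}. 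Thus on each compact set the moving sequence $(h_k)$ eventually agrees with $(H_j\ast\varphi_k)$ for a \emph{fixed} Boehmian $H_j$, and fundamentality of $(h_k)$ on $U$ reduces to the single--Boehmian theory, exactly as in the proof of Lemma \ref{contex} via Lemmas \ref{funccor}, \ref{samelimit}, and \ref{simply}.

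Finally I would verify $F\vert_{U_i}=F_i$ for every $i$; together with $H_j\vert_{U_i}=F_i$ and the separation property this also gives $F\vert_{V_j}=H_j$. Fixing $i$, a representative $(p_n)$ of $F_i$, and a compact $K\Subset U_i$ with $\overline{K^{+\vare}}\Subset U_i$, I would apply Lemma \ref{simply}: choose a delta sequence $(\omega_m)$ witnessing fundamentality of $(p_n)$ on $\overline{K^{+\vare}}$, so that by Lemma \ref{funccor} $p_k\ast\omega_m\toto F_i\ast\omega_m\in\cfuncs{\overline{K^{+\vare}}}$ as $k\to\infty$; then for large $k$ one has $h_k=F_i\ast\varphi_k$ near $K$ (by Lemma \ref{commute} and $H_k\vert_{U_i}=F_i$), hence $h_k\ast\omega_m=(F_i\ast\omega_m)\ast\varphi_k\toto F_i\ast\omega_m$ by Lemma \ref{samelimit}, so $(h_k\vert_{U_i}-p_k)\ast\omega_m\toto0$ and $(h_k\vert_{U_i})\sim(p_k)$. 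The main obstacle is precisely this passage from the two--set gluing to the countable exhaustion: Theorem \ref{glue2} controls only a single gap $\vare$, whereas here the global object must be assembled as a diagonal limit whose $k$--th term is extracted from a \emph{different} Boehmian $H_k$ on a different set $V_k$, and the real work lies in showing---through the chain compatibility $H_k\vert_{V_j}=H_j$ together with Lemma \ref{commute}, which collapses the moving target to a fixed Boehmian on each compact piece---that this diagonal sequence is fundamental on all of $U$ and restricts correctly.
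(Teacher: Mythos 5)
Your proposal is correct and follows essentially the same route as the paper: inductively glue along $V_k=U_1\cup\dots\cup U_k$ via Theorem \ref{glue2}, convolve the $k$-th partial gluing with a shrinking delta sequence to obtain a continuous function on an exhausting open set, extend to $U$, and show the resulting diagonal sequence is fundamental and restricts correctly using Lemmas \ref{commute} and \ref{contex}. The only (welcome) addition is your explicit verification, via Lemma \ref{prop1}, that the partial gluing and $F_{k+1}$ agree on $V_k\cap U_{k+1}$ before invoking Theorem \ref{glue2}, a point the paper's proof leaves implicit.
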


\begin{proof}
Define $V_k=\bigcup_{i=1}^{k}U_{i}$ and $G_1=F_1$. By Theorem \ref{glue2}, for all $k>1$ there exist Boehmians $G_k\in\boehms{V_K}$ such that $G_{k+1}\vert_{V_k}=G_{k}$.

Let $(\vare_n)$ be a decreasing sequence of real numbers such that $\vare_n\to 0$.  By Lemma \ref{findseq}, there exists a sequence $(\phi_n)\in\Delta$ such that $s(\phi_n)<\vare_n$ and $G_n\ast\phi_n\in\cfuncs{V_n^{-\vare_n}}$. Let, for each $n\in\nats$, $g_n$ be a continuous extension of $G_n\ast\phi_n$ to $U$. We claim that $(g_n)$ is fundamental on $U$ and $[(g_n)]\vert_{U_k}=F_k$ for every $k$. Let $K\Subset U$.  Let $n_1$ be such that $K\Subset V_{n_1}^{-\vare_{n_1}}$ and let $n_2$ be such that $s(\phi_{n})<d(K,(V_{n_1}^{-\vare_{n_1}})^\complement)$ for all $n>n_2$.  If $W=V_{n_1}^{-\vare_{n_1}}$ and $n_0=\max\{n_1,n_2\}$, then by Lemma \ref{commute} and the construction of $G_n$, for each $n>n_0$ we have
$$
g_n\vert_W=(G_n\ast\phi_n)\vert_W=G_n\vert_{V_{n_1}}\ast\phi_n=G_{n_1}\ast\phi_n
$$
on $K$.  By Lemma \ref{contex}, $\left(g_n\vert_W\right)$ is fundamental on $V_{n_1}$, so there exists $(\psi_n)\in\Delta$ such that $g_n\vert_{V_{n_2}^{-\vare_{n_2}}}\ast\psi_m$ converges uniformly on $K$ for each $m$.  Since $K$ is an arbitrary compact subset of $U$, this shows that $(g_n)$ is fundamental on $U$.

To show that  $[(g_n)]\vert_{U_k}=F_k$ for every $k$, we will show that for fixed $k$, some $(f_n)\in F_k$ and any $K\Subset U_{k}$, there is $(\psi_n)\in\Delta$ such that $(g_n\vert_{U_k}-f_n)\ast\psi_m \toto 0$ for each fixed $m$ .  We choose $n_0$ such that $g_n=G_n\ast \phi_n$ on $U_k$ and that $s(\phi_n)<d(K,(U_k)^\complement)$ for every $n\geq n_0$.  By Lemma \ref{commute} and the fact that $G_k\vert_{U_k}=F_k$, we have $\left( G_n\ast\phi_n \right)\vert_{U_k^{-\vare_{n_0}}}=F_k\ast\phi_n$ on $U_k^{-\vare_{n_0}}$. Hence, for each $n$,
$$
g_n\vert_{U_k^{-\vare_{n}}}=\left( G_n\ast\phi_n \right)\vert_{U_k^{-\vare_{n}}} = F_k\ast\phi_n
$$
on $U_k^{-\vare_{n_0}}$. Let, for every $n$, $h_n$ be a continuous extension of $g_n\vert_{U_k^{-\vare_{n}}}$ to $U_k$. Again by Lemma \ref{contex}, $h_n= F_k$.  Since $h_n=g_n$ on $K$ for $n>n_0$, for any $(\psi_n)\in\Delta$, $(h_n-g_n)\ast\psi_m\toto 0$ on $K$ for every $m$.

\end{proof}

\begin{thm}
The space of Boehmians over $\rn$ is a sheaf.
\end{thm}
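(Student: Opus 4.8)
The plan is to verify the two sheaf axioms (a) and (b), since the presheaf structure is already in hand. The separation axiom (a) will follow almost immediately from Lemma \ref{prop1} together with the vector space structure on $\boehms{U}$, while the gluing axiom (b) for an arbitrary family will be reduced to the countable case of Lemma \ref{countable} by exploiting the fact that $\rn$ is second countable.

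For (a), suppose $\{U_\alpha\}$ is a family of open sets with $U=\bigcup U_\alpha$ and $F,G\in\boehms{U}$ satisfy $F\vert_{U_\alpha}=G\vert_{U_\alpha}$ for every $\alpha$. I would choose representatives $(f_n)\in F$ and $(g_n)\in G$ and set $h_n=f_n-g_n$, so that $(h_n)$ represents $F-G$. The hypothesis says precisely that $(h_n)\sim(0)$ on each $U_\alpha$, and Lemma \ref{prop1} then yields $(h_n)\sim(0)$ on $U$. Thus $F-G=0$, that is, $F=G$.

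For (b), let $\{U_\alpha\}$ be an arbitrary family with $U=\bigcup U_\alpha$ and let $F_\alpha\in\boehms{U_\alpha}$ be compatible, meaning $F_\alpha\vert_{U_\alpha\cap U_\beta}=F_\beta\vert_{U_\alpha\cap U_\beta}$ for all $\alpha,\beta$. Since $\rn$ is second countable, $U$ is Lindel\"of, so the open cover $\{U_\alpha\}$ admits a countable subcover $\{U_{\alpha_n}\}_{n\in\nats}$. This subfamily still satisfies the compatibility condition, so Lemma \ref{countable} produces a Boehmian $F\in\boehms{U}$ with $F\vert_{U_{\alpha_n}}=F_{\alpha_n}$ for every $n$. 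It remains to check that $F\vert_{U_\beta}=F_\beta$ for every index $\beta$, not merely those appearing in the subcover. Fixing such a $\beta$, the sets $U_\beta\cap U_{\alpha_n}$ cover $U_\beta$ because $\{U_{\alpha_n}\}$ covers $U\supseteq U_\beta$. On each of them, transitivity of the presheaf restrictions gives $F\vert_{U_\beta\cap U_{\alpha_n}}=(F\vert_{U_{\alpha_n}})\vert_{U_\beta\cap U_{\alpha_n}}=F_{\alpha_n}\vert_{U_\beta\cap U_{\alpha_n}}$, while compatibility of the original family gives $F_\beta\vert_{U_\beta\cap U_{\alpha_n}}=F_{\alpha_n}\vert_{U_\beta\cap U_{\alpha_n}}$. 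Hence $F\vert_{U_\beta}$ and $F_\beta$ agree on every member of an open cover of $U_\beta$, so the separation axiom (a) forces $F\vert_{U_\beta}=F_\beta$, completing the verification of (b).

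I expect the main obstacle to be exactly this last step, the passage from the countable subcover back to the full family. The countable lemma only controls the restrictions to the $U_{\alpha_n}$, and it is here that both key ingredients are indispensable: second countability of $\rn$ is what permits the reduction to Lemma \ref{countable}, and the separation axiom (a) is what promotes agreement on the subcover to agreement on every $U_\beta$. The remaining parts are routine given the machinery already developed.
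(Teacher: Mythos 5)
Your proof is correct and follows essentially the same route as the paper: property (a) via Lemma \ref{prop1} applied to a representative of $F-G$, and property (b) by extracting a countable subcover (Lindel\"of), invoking Lemma \ref{countable}, and then using (a) to upgrade agreement on the sets $U_\beta\cap U_{\alpha_n}$ to agreement on all of $U_\beta$. You are slightly more explicit than the paper in justifying the existence of the countable subcover via second countability, but the argument is the same.
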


 \begin{proof}
It suffices to show properties (a) and (b).  Property (a) follows from Lemma \ref{prop1}. For the second property, we assume that $\{U_\alpha\}_{\alpha\in I}$ is a open cover of $U$. Let $\{U_{\alpha_n}\}$ be a countable subcover of $\{U_\alpha\}$.  Then, by Lemma \ref{countable}, there is a Boehmian $F\in\boehms{U}$ such that $F\vert_{U_{\alpha_n}}=F_{\alpha_n}$ for every $n$.  We must show that $F\vert_{U_\alpha}=F_\alpha$ for every $\alpha$. Let $\alpha \in I$ and let  $V_n=U_\alpha\cap U_{\alpha_n}$.   Without loss of generality we may assume that $V_n\neq\emptyset$.  Then $F_\alpha\vert_{V_n}=F_n\vert_{V_n}$ for all $n$.  Since $F\vert_{U_{\alpha_n}}=F_{\alpha_n}$, we have $F\vert_{V_n}=F_n\vert_{V_n}=F_\alpha\vert_{V_n}$ for all $n$.  By (a), we conclude $F\vert_{U_\alpha}=F_\alpha$.
\end{proof}

\section{Fundamental and Cauchy sequences} \label{Ch6}

In this paper we have used a space of Boehmians constructed by taking equivalence classes of fundamental sequences. A similar construction is used in previous papers by P. Mikusi\'nski which uses equivalence classes of Cauchy sequences. This difference has some technical consequences, but we will show that the constructed spaces of Boehmians are isomorphic.

Boehmians on open subsets of $\rn$ were introduced in \cite{open} with the idea of $\Delta$-convergence.

\begin{defn}[$\Delta$-Convergence]
A sequence of functions $f_n\in\cfuncs{U}$ is $\Delta$-convergent to $f\in\cfuncs{U}$, denoted by $f_n \Dto f$, if for each $K\Subset U$ there exists $(\delta_n)\in\Delta$ such that  $(f-f_n)\ast\delta_n\toto 0$ on $K$ as $n\to \infty$.  \end{defn}

It is then shown that $\cfuncs{U}$, endowed with $\Delta$-convergence, is a linear metric space.  The space of Boehmians on $U$ is defined as the completion of $\cfuncs{U}$ with respect to $\Delta$-convergence.  We will denote this space of Boehmians by $\oboehms{U}$. The equivalence class of a Cauchy sequence $(f_n)$ will be denoted $[(f_n)]_\Delta$.  Note, if $(f_n)$ and $(g_n)$ are such that for each $K\Subset U$ there is a $(\delta_n)$ so that $(f_n-g_n)\ast\delta_n\to 0$ as $n\to\infty$, then $(f_n)$ and $(g_n)$ are members of the same equivalence class in $\oboehms{U}$.

\begin{defn}[$\Delta$-Cauchy]
For an open set $U\subseteq\reals$ and a sequence of functions $f_n\in\cfuncs{U}$, we say that $(f_n)$ is $\Delta$-Cauchy if for all increasing sequences of natural numbers $(p_n)$ and $(q_n)$, and for every $K\Subset U$, there exists $(\varphi_n)\in\Delta$ such that $\Knorm{(f_{p_n}-f_{q_n})\ast\varphi_n}{K}\to 0$.
\end{defn}

In other words, $(f_n)$ is $\Delta$-Cauchy if $f_{p_n}-f_{q_n}\Dto 0$ for all increasing sequences of natural numbers $(p_n)$ and $(q_n)$. It is clear that if a sequence of functions in $\cfuncs{U}$ is $\Delta$-Cauchy then it is a member of some Boehmian in $\oboehms{U}$.

A second type of convergence considered in \cite{open} is $\delta$-convergence.

\begin{defn}[$\delta$-convergence]
A sequence of functions $f_n\in\cfuncs{U}$ is $\delta$-convergent to zero, denoted by $f_n\dto 0$, if for each $K\Subset U$ there exists $(\delta_n)\in\Delta$ such that $\Knorm{f_n\ast\delta_m}{K}\to 0$ as $n\to\infty$ for each $m\in\nats$.  If $(f_n-f)\dto 0$, we write $f_n\dto f$.
\end{defn}

\begin{defn}[$\delta$-Cauchy]
 A sequence of functions $f_n\in\cfuncs{U}$ is called a $\Delta$-Cauchy sequence if for all increasing sequences of natural numbers $(p_n)$ and $(q_n)$ we have $f_{p_n}-f_{q_n}\dto 0$.
\end{defn}

 	Note that two fundamental sequences $(f_n)$ and $(g_n)$ are equivalent if $f_n-g_n\dto 0$ and a sequence $(f_n)$ is \textit{$\delta$-Cauchy} if and only if it is a fundamental sequence.  We see that our construction of Boehmians as $\mathscr{A}/\sim$ is the completion of the space endowed with $\delta$-convergence. Following are some important lemmas from \cite{open} about the relationship between $\delta$-convergence and $\Delta$-convergence.

\begin{lemm}\label{dD}
$f_n\dto f$ implies $f_n\Dto f$.
\end{lemm}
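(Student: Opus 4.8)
The plan is to reduce to $f=0$ and then run a diagonal selection that converts the ``fix $m$, let $n\to\infty$'' decay supplied by $\delta$-convergence into the ``diagonal $\delta_n$'' decay demanded by $\Delta$-convergence. Since both $\delta$- and $\Delta$-convergence to $f$ are phrased entirely through the differences $f_n-f$, I would first replace $f_n$ by $f_n-f$, so that it suffices to prove $f_n\dto 0$ implies $f_n\Dto 0$. Fixing $K\Subset U$, the hypothesis of $\delta$-convergence provides a single $(\delta_n)\in\Delta$ such that, for every fixed $m\in\nats$, $\Knorm{f_n\ast\delta_m}{K}\to 0$ as $n\to\infty$. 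The goal is to manufacture from this one delta sequence a reindexed delta sequence $(\varphi_n)$ for which the diagonal quantity $\Knorm{f_n\ast\varphi_n}{K}$ tends to $0$; this is precisely the statement that $f_n\Dto 0$ on $K$.

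The heart of the argument is the diagonal choice of indices. For each $m$, the convergence $\Knorm{f_n\ast\delta_m}{K}\to 0$ lets me pick $N_m$ with $\Knorm{f_n\ast\delta_m}{K}<1/m$ for all $n\geq N_m$, and by enlarging successive thresholds if necessary I arrange $N_1<N_2<\cdots$, so that the half-open blocks $[N_m,N_{m+1})$ partition $\{n:n\geq N_1\}$. I then set $\varphi_n=\delta_m$ for $N_m\leq n<N_{m+1}$ (and $\varphi_n=\delta_1$ for the finitely many $n<N_1$). Whenever $n$ lies in the block $[N_m,N_{m+1})$ we have $\varphi_n=\delta_m$ and $n\geq N_m$, hence $\Knorm{f_n\ast\varphi_n}{K}=\Knorm{f_n\ast\delta_m}{K}<1/m$. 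As $n\to\infty$ the associated $m\to\infty$, so $\Knorm{f_n\ast\varphi_n}{K}\to 0$, i.e. $f_n\ast\varphi_n\toto 0$ on $K$ (the sign from $0-f_n$ being irrelevant).

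It then remains to confirm that $(\varphi_n)$ is a legitimate delta sequence. Each $\varphi_n$ equals some $\delta_m$, hence $\varphi_n\in\mathscr{D}_o$; and since $\varphi_n=\delta_m$ with $m\to\infty$ as $n\to\infty$, we have $s(\varphi_n)=s(\delta_m)\to 0$, so $(\varphi_n)\in\Delta$. Because $K\Subset U$ was arbitrary and the definition of $\Delta$-convergence permits the delta sequence to depend on $K$, this yields $f_n\Dto f$. I expect the only point requiring genuine care, rather than a real obstacle, to be the bookkeeping that keeps $(N_m)$ strictly increasing so that every large $n$ falls into a unique block and the decay $s(\delta_m)\to 0$ is inherited by $(\varphi_n)$; the paper's standing convention that finitely many of the convolutions $f_n\ast\varphi_n$ may fail to be defined on all of $K$ disposes of any low-index definedness concerns.
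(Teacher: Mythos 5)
Your proof is correct: the reduction to $f=0$, the blockwise diagonal reindexing $\varphi_n=\delta_{m}$ for $N_m\leq n<N_{m+1}$, and the verification that $(\varphi_n)\in\Delta$ with $\Knorm{f_n\ast\varphi_n}{K}<1/m\to 0$ together give exactly what the definition of $\Delta$-convergence asks for on each $K\Subset U$. The paper itself states this lemma without proof, deferring to the cited reference \cite{open}, and your diagonal extraction is the standard argument used there, so there is nothing of substance to contrast.
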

\begin{lemm}\label{subseq}
If $f_n\Dto f$, then there exists a subsequence $(f_{p_n})$ of $(f_n)$ such that $f_{p_n}\dto f$.
\end{lemm}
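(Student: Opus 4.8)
The plan is to reduce to the case $f=0$ and then produce the subsequence in two stages: first on a single compact set, then globally by a diagonal argument over an exhaustion of $U$. Replacing $f_n$ by $f_n-f$ leaves both $\Dto$ and $\dto$ unchanged (the defining expressions only involve the differences, and $\Knorm{(f-f_n)\ast\delta_n}{K}=\Knorm{(f_n-f)\ast\delta_n}{K}$), so I may assume $f_n\Dto 0$ and must extract $(f_{p_n})$ with $f_{p_n}\dto 0$.

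The heart of the argument is the single-compact step. Fix $K\Subset U$ and choose $\vare>0$ with $\overline{K^{+\vare}}\Subset U$. By $\Delta$-convergence there is $(\delta_n)\in\Delta$ with $f_n\ast\delta_n\toto 0$ on $\overline{K^{+\vare}}$. I would then pick indices $p_1<p_2<\cdots$ growing so fast that $\sum_k s(\delta_{p_k})<\vare$, and define the \emph{fixed} test functions $\sigma_m=\delta_{p_m}\ast\delta_{p_{m+1}}\ast\cdots$. By Theorem \ref{infiniteconv} each $\sigma_m$ is a well-defined element of $\mathscr{D}_o$ with $s(\sigma_m)\le\sum_{j\ge m}s(\delta_{p_j})\to 0$, so $(\sigma_m)\in\Delta$. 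The crucial observation is that for every $k\ge m$ the factor $\delta_{p_k}$ already occurs inside $\sigma_m$, so $\sigma_m=\delta_{p_k}\ast\Theta_{m,k}$, where $\Theta_{m,k}\in\mathscr{D}_o$ is the infinite convolution of the remaining factors; hence $\|\Theta_{m,k}\|_1=1$ and $s(\Theta_{m,k})<\vare$. Therefore, for fixed $m$ and all $k\ge m$, $f_{p_k}\ast\sigma_m=(f_{p_k}\ast\delta_{p_k})\ast\Theta_{m,k}$, and Lemma \ref{normsplit} yields $\Knorm{f_{p_k}\ast\sigma_m}{K}\le\Knorm{f_{p_k}\ast\delta_{p_k}}{\overline{K^{+\vare}}}\to 0$ as $k\to\infty$. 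Thus $(\sigma_m)$ witnesses $\delta$-convergence of this subsequence to $0$ on the single set $K$.

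To obtain one subsequence valid on all of $U$, I would fix an exhaustion $K_1\Subset K_2\Subset\cdots$ with $\bigcup_j K_j=U$, run the construction above on each $K_j$ to produce nested index sets $N_1\supseteq N_2\supseteq\cdots$ together with the associated delta sequences $(\sigma^{(j)}_m)_m$, and take the diagonal subsequence $(f_{p_n})$. For each fixed $j$ the tail of $(p_n)$ lies in $N_j$, and passing to a subsequence preserves $\Knorm{f_{p_n}\ast\sigma^{(j)}_m}{K_j}\to 0$ for every $m$; since an arbitrary $K\Subset U$ lies in some $K_j$ and $\Knorm{\cdot}{K}\le\Knorm{\cdot}{K_j}$, this gives $f_{p_n}\dto 0$ on $U$. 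I expect the main obstacle to be precisely the single-compact step, and specifically the recognition that the \emph{one} fixed mollifier $\sigma_m$ must simultaneously absorb every diagonal term $\delta_{p_k}$ with $k\ge m$: the infinite convolution furnished by Theorem \ref{infiniteconv} is exactly what makes this possible, after which the bound from Lemma \ref{normsplit} and the diagonal patching are routine bookkeeping.
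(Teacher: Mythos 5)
Your argument is correct. Note that the paper does not actually prove this lemma --- it is quoted from \cite{open} without proof --- so the natural comparison is with the paper's proof of the closely analogous Lemma \ref{seqconstr}, and there your method is essentially the same: thin the diagonal mollifiers to a subsequence with summable supports, package the tails into infinite convolutions $\sigma_m=\delta_{p_m}\ast\delta_{p_{m+1}}\ast\cdots$ via Theorem \ref{infiniteconv}, and use the factorization $\sigma_m=\delta_{p_k}\ast\Theta_{m,k}$ together with Lemma \ref{normsplit} and $\|\Theta_{m,k}\|_1=1$ to convert the single diagonal estimate $\Knorm{f_{p_k}\ast\delta_{p_k}}{\overline{K^{+\vare}}}\to 0$ into an estimate valid for every fixed $m$. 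The only structural difference is organizational: Lemma \ref{seqconstr} handles all compact sets in one pass by tying the $n$-th mollifier to the $n$-th member of a fixed exhaustion $(K_j)$ with geometrically decaying $d(K_j,U^\complement)$, reading off $\Delta$-convergence through the seminorms $P_j$ and Theorem \ref{equiv}, whereas you first settle one compact set and then patch with nested index sets and a diagonal extraction; both are sound, and yours has the merit of working directly from the definition of $\Dto$. One small point worth making explicit: to place the bound on $K$ itself you need $K\subseteq\left(\overline{K^{+\vare}}\right)^{-\vare'}$ for the value $\vare'$ you feed into Lemma \ref{normsplit}, and this requires $\vare'$ strictly smaller than $\vare$ (with $\vare'=\vare$ that set can even be empty); since $s(\Theta_{m,k})\le\sum_{j\ge m,\,j\ne k}s(\delta_{p_j})$ is bounded by the full sum, which you arranged to be strictly less than $\vare$, this is satisfied --- it is the same $\vare$-bookkeeping the paper itself elides, not a gap.
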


First we show that there is a surjection from $\oboehms{U}$ to $\boehms{U}$.

\begin{cor}\label{surjective}
For every equivalence class $[(f_n)]_\delta\in\boehms{U}$, there is a class $[(g_n)]_\Delta\in\oboehms{U}$ such that $[(f_n)]_\delta\subseteq[(g_n)]_\Delta$.
\end{cor}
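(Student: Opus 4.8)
The plan is to show that the identity on sequences descends to a well-defined inclusion of $\delta$-classes into $\Delta$-classes, so that the required $[(g_n)]_\Delta$ can be taken to be the $\Delta$-class of any representative of $[(f_n)]_\delta$. Two things must be checked: first, that a representative of $[(f_n)]_\delta$ determines a class in $\oboehms{U}$ at all; and second, that the resulting $\Delta$-class is independent of the chosen representative and in fact contains the entire $\delta$-class.

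First I would verify that every fundamental sequence is $\Delta$-Cauchy, so that it sits inside $\oboehms{U}$. Recall from the text that a sequence is $\delta$-Cauchy if and only if it is fundamental, and that $\delta$-Cauchy means $f_{p_n}-f_{q_n}\dto 0$ for all increasing $(p_n),(q_n)$. Applying Lemma \ref{dD} to each such difference gives $f_{p_n}-f_{q_n}\Dto 0$ for all increasing $(p_n),(q_n)$, which is precisely the definition of $\Delta$-Cauchy. Hence any $(f_n)\in[(f_n)]_\delta$ is $\Delta$-Cauchy and determines a class $[(f_n)]_\Delta\in\oboehms{U}$.

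Next I would show that the $\delta$-class is contained in this single $\Delta$-class. Suppose $(g_n)\in[(f_n)]_\delta$, so that $(g_n)\sim(f_n)$; by the definition of $\sim$ (equivalently, the remark that two fundamental sequences are equivalent when $f_n-g_n\dto 0$) this says $f_n-g_n\dto 0$. Invoking Lemma \ref{dD} once more yields $f_n-g_n\Dto 0$, which is exactly the criterion stated in the text for $(f_n)$ and $(g_n)$ to lie in the same equivalence class of $\oboehms{U}$. Therefore $(g_n)\in[(f_n)]_\Delta$, and since $(g_n)$ was arbitrary we conclude $[(f_n)]_\delta\subseteq[(f_n)]_\Delta$; taking $[(g_n)]_\Delta=[(f_n)]_\Delta$ completes the argument.

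The argument is essentially a matter of matching definitions, so I do not anticipate a genuine obstacle; the entire content is carried by Lemma \ref{dD}, which converts $\delta$-convergence to zero into $\Delta$-convergence to zero. The only point requiring care is the bookkeeping with the diagonal index: the definition of $\Delta$-Cauchy, and of membership in a class of $\oboehms{U}$, convolves $f_{p_n}-f_{q_n}$ with $\varphi_n$ along the diagonal, whereas fundamentality and $\sim$ for $\boehms{U}$ use a fixed test function $\varphi_m$ for each $m$. It is exactly Lemma \ref{dD} together with the stated equivalence ``$\delta$-Cauchy $\iff$ fundamental'' that reconciles these two indexing conventions, so I would cite it at both places rather than attempt a direct estimate.
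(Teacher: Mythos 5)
Your proof is correct and follows essentially the same route as the paper's: both establish that a fundamental sequence is $\Delta$-Cauchy via Lemma \ref{dD} applied to differences along subsequences, and then use Lemma \ref{dD} a second time to show every other representative of the $\delta$-class lands in the same $\Delta$-class. Your added remark that Lemma \ref{dD} is what reconciles the fixed-index versus diagonal-index conventions is a helpful clarification but does not change the argument.
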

\begin{proof}
Consider $[(f_n)]_\delta\in\boehms{U}$.  Since every element of $[(f_n)]_\delta$ is a fundamental sequence, for any increasing sequences of natural numbers $(p_n)$ and $(q_n)$ we have $f_{p_n}-f_{q_n}\dto 0$.  By Lemma \ref{dD}, $f_{p_n}-f_{q_n}\Dto 0$.  Thus, $(f_n)$ is $\Delta$-Cauchy, and so $[(f_n)]_\Delta$ exists.  Additionally, for any other $(g_n)\in[(f_n)]_\delta$, we have that $f_n-g_n\dto 0$, so $f_n-g_n\Dto 0$.  Thus, $(g_n)\in [(f_n)]_\Delta$.
\end{proof}

Now we show that each class of $\Delta$-sequences contains a unique class of $\delta$-sequences. Let $(K_j)$ be a sequence of compact subsets of an open subset $U\subseteq\reals^N$ such that
\begin{itemize}
\item[(i)]$\bigcup_{j=1}^{\infty}K_j=U,$
\item[(ii)]$K_j\subset K_{j+1}~\mathrm{for}~ j\in\nats,$
\item[(iii)]$d(K_{j+1},U^\complement)<\frac{1}{2}d(K_{j},U^\complement)~\mathrm{for}~j\in\nats$ and
\item[(iv)]$d(K_{1},U^\complement)<\frac{1}{2}.$
\end{itemize}
For a continuous function $f$ on $U$, we define $$P_j(f)=\inf\{\|f\ast\phi\|_{K_j}:\phi\in\mathscr{D}_o~\mathrm{and}~s(\phi)<d(K_{j},U^\complement)\}.$$

\begin{thm}\label{equiv}
Let $U\in\reals^N$ be an open set and let $f_n\in\mathscr{C}(U)$ for $n\in\nats$.  Then $f_n\overset{\Delta}\longrightarrow 0$ if and only if for each $j\in\nats$, $P_j(f_n)\to0$ as $n\to\infty$.
\end{thm}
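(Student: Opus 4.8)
The plan is to treat the two implications separately; the forward implication is essentially immediate, whereas the converse requires a diagonal construction of a suitable delta sequence.

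For the forward implication, suppose $f_n\Dto 0$ and fix $j\in\nats$. Since $K_j\Subset U$, the definition of $\Delta$-convergence supplies a $(\delta_n)\in\Delta$ with $f_n\ast\delta_n\toto 0$ on $K_j$. As $s(\delta_n)\to 0$, for all large $n$ we have $s(\delta_n)<d(K_j,U^\complement)$, so $\delta_n$ is one of the competitors in the infimum defining $P_j(f_n)$. Hence $0\le P_j(f_n)\le\Knorm{f_n\ast\delta_n}{K_j}\to 0$, which is exactly what is wanted.

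For the converse, assume $P_j(f_n)\to 0$ for every $j$ and fix an arbitrary $K\Subset U$; the goal is a single $(\delta_n)\in\Delta$ with $f_n\ast\delta_n\toto 0$ on $K$. The obvious idea is, for each $n$, to choose $\delta_n\in\mathscr{D}_o$ nearly attaining the infimum $P_j(f_n)$ at some level $j$. The catch is that the competitors for $P_j$ need only satisfy $s(\phi)<d(K_j,U^\complement)$, a fixed positive number once $j$ is fixed; choosing every $\delta_n$ at the same level $j$ makes $\Knorm{f_n\ast\delta_n}{K_j}\to 0$ but leaves the support radii uncontrolled, so $(\delta_n)$ need not be a delta sequence. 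The remedy is to let the level grow with $n$. Choose an increasing sequence $N_1<N_2<\cdots$ with $N_j\to\infty$ so that $P_j(f_n)<1/j$ whenever $n\ge N_j$, and for each $n$ in the block $N_j\le n<N_{j+1}$ pick $\delta_n\in\mathscr{D}_o$ with $s(\delta_n)<d(K_j,U^\complement)$ and $\Knorm{f_n\ast\delta_n}{K_j}<P_j(f_n)+1/j<2/j$ (and any fixed $\delta_n\in\mathscr{D}_o$ for $n<N_1$). Writing $j(n)$ for the block index of $n$, we have $j(n)\to\infty$, and here conditions (iii)--(iv) do the essential work: they force $d(K_j,U^\complement)<2^{-j}$, so $s(\delta_n)<d(K_{j(n)},U^\complement)<2^{-j(n)}\to 0$, and since each $\delta_n\in\mathscr{D}_o$ this shows $(\delta_n)\in\Delta$.

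It remains to verify $f_n\ast\delta_n\toto 0$ on the given $K$. Using that $(K_j)$ exhausts $U$, fix $J$ with $K\subseteq K_J$ (a standard property of the exhaustion). For $n\ge N_J$ we have $j(n)\ge J$, so by monotonicity $K\subseteq K_J\subseteq K_{j(n)}$, and the convolution is defined there because $s(\delta_n)<d(K_{j(n)},U^\complement)$; consequently $\Knorm{f_n\ast\delta_n}{K}\le\Knorm{f_n\ast\delta_n}{K_{j(n)}}<2/j(n)\to 0$. Thus $f_n\ast\delta_n\toto 0$ on $K$, and since $K$ was arbitrary, $f_n\Dto 0$. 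The one genuine difficulty in the whole argument is reconciling the two opposing demands on $\delta_n$ in the converse --- a shrinking support (which forces a large level $j$) against smallness of $f_n\ast\delta_n$ on $K$ (which needs $K\subseteq K_j$) --- and it is precisely the geometric decay of $d(K_j,U^\complement)$ guaranteed by (iii) that makes both achievable simultaneously.
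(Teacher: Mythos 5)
Your proof is correct, and it actually supplies an argument the paper omits: the paper's ``proof'' of this theorem is only a citation to \cite{open}, so there is no in-text argument to compare against. Both of your directions are sound. The forward implication correctly tests the infimum defining $P_j(f_n)$ against the tail of a delta sequence furnished by $\Delta$-convergence on $K_j$, and in the converse your block construction works: the geometric decay $d(K_j,U^\complement)<2^{-j}$ forced by (iii)--(iv) is exactly what makes the chosen $(\delta_n)$ a delta sequence while the level $j(n)\to\infty$ still controls $\Knorm{f_n\ast\delta_n}{K_{j(n)}}<2/j(n)$, and the convolutions are defined where you need them because $s(\delta_n)<d(K_{j(n)},U^\complement)$. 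The one step you should not dismiss as ``a standard property of the exhaustion'' is the claim that every $K\Subset U$ is contained in some $K_J$: conditions (i)--(iv) as literally stated do not imply it, since an increasing sequence of compacta can have union $U$ without eventually absorbing every compact subset of $U$ (one needs $K_j\subset\mathrm{int}(K_{j+1})$, or equivalently that the interiors of the $K_j$ cover $U$). This is arguably a defect in the paper's listed hypotheses on $(K_j)$ rather than in your argument --- any exhaustion one would actually construct has the property --- but it is the property your converse genuinely uses, so it should be stated as an explicit assumption rather than invoked implicitly.
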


\begin{proof}
See \cite{open}.
\end{proof}

\begin{lemm}\label{seqconstr}
Let $U\subseteq\reals^N$ be an open set and let $(K_j)$ be a sequence of compact subset of $U$ satisfying conditions (i)-(iv) above.  If $(f_n)$ is $\Delta$-Cauchy, then there is a subsequence c of $(f_n)$, such that for every $n$ there are $\vare_n\geq 0$ and $\phi_n\in\mathscr{D}_o$ such that
\begin{itemize}
\item[(1)]  $\sum_{i=1}^\infty\vare_i<\infty,$
\item[(2)]  $s(\phi_n)<\vare_{n},$
\item[(3)]  $(f_{p_{n+1}}-f_{p_n})\ast\phi_n\in\mathscr{C}(K_{n}),$
\item[(4)] $\|(f_{p_{n+1}}-f_{p_n})\ast\phi_n\|_{K_{n+1}}<\frac{1}{2^{n}}.$
\end{itemize}
Moreover, the sequence $(f_{p_n})$ is fundamental.
\end{lemm}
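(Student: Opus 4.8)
The plan is to translate the $\Delta$-Cauchy hypothesis into a Cauchy condition for the functionals $P_j$, then extract the subsequence by a ``fast Cauchy'' induction, and finally verify fundamentality by a telescoping estimate built on tail convolutions. The first step I would record is: for every fixed $j\in\nats$, $P_j(f_m-f_n)\to 0$ as $m,n\to\infty$. This follows by contradiction from the characterization of $\Dto$-convergence via the $P_j$ (Theorem \ref{equiv}). If it failed for some $j$, there would be $\eta>0$ and indices $m_k,n_k\to\infty$ with $P_j(f_{m_k}-f_{n_k})\geq\eta$; passing to a common subsequence one may take $(m_k)$ and $(n_k)$ strictly increasing, and then the $\Delta$-Cauchy hypothesis gives $f_{m_k}-f_{n_k}\Dto 0$, whence $P_j(f_{m_k}-f_{n_k})\to 0$ by Theorem \ref{equiv}, a contradiction.

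With this in hand I build $(p_n)$ inductively. For each $n$, apply the statement above with $j=n+1$ and tolerance $2^{-n}$ to obtain a threshold $M_n$ such that $P_{n+1}(f_m-f_k)<2^{-n}$ whenever $m,k\geq M_n$; then choose $(p_n)$ strictly increasing with $p_n\geq M_n$ (for instance $p_{n+1}=\max\{p_n+1,M_{n+1}\}$), so that $P_{n+1}(f_{p_{n+1}}-f_{p_n})<2^{-n}$ for every $n$. Unwrapping the infimum defining $P_{n+1}$ produces a $\phi_n\in\mathscr{D}_o$ with $s(\phi_n)<d(K_{n+1},U^\complement)$ and $\Knorm{(f_{p_{n+1}}-f_{p_n})\ast\phi_n}{K_{n+1}}<2^{-n}$, which is precisely (4); since $s(\phi_n)<d(K_{n+1},U^\complement)\le d(K_n,U^\complement)$, the convolution is defined and continuous on $K_n$, giving (3). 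Finally I take $\vare_n$ to be any number in $(s(\phi_n),d(K_{n+1},U^\complement))$, so that (2) holds; and conditions (iii)--(iv) on $(K_j)$ force $d(K_{n+1},U^\complement)<2^{-(n+1)}$, hence $\sum_n\vare_n<\infty$, which is (1).

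For the ``moreover'' clause I would show $(f_{p_n})$ is fundamental. By Theorem \ref{infiniteconv} the tail products $\Phi_M=\phi_M\ast\phi_{M+1}\ast\cdots$ exist in $\mathscr{D}_o$ with $s(\Phi_M)\le\sum_{i\ge M}s(\phi_i)\le\sum_{i\ge M}\vare_i\to 0$, so $(\Phi_M)_M\in\Delta$. Fix $K\Subset U$, pick $j_0$ with $K$ in the interior of $K_{j_0}$ and set $\eta=d(K,K_{j_0}^\complement)>0$, and choose $N$ so large that $\sum_{i\ge N}\vare_i<\eta$; put $\delta_m=\Phi_{m+N}$, so that $s(\delta_m)<\eta\le d(K,U^\complement)$ for all $m$ and every $f_{p_n}\ast\delta_m$ is defined on $K$. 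For fixed $m$, write $M=m+N$ and telescope: for $n'<n$,
$$
(f_{p_n}-f_{p_{n'}})\ast\Phi_M=\sum_{k=n'}^{n-1}(f_{p_{k+1}}-f_{p_k})\ast\Phi_M .
$$
Factoring $\Phi_M=\phi_k\ast\sigma_k$ for $k\ge M$, where $\sigma_k\in\mathscr{D}_o$ is the convolution of all factors $\phi_M,\phi_{M+1},\dots$ except $\phi_k$ (so $\|\sigma_k\|_1=1$ and $s(\sigma_k)\le\sum_{i\ge N}\vare_i<\eta$), Lemma \ref{normsplit} together with (4) yields $\Knorm{(f_{p_{k+1}}-f_{p_k})\ast\Phi_M}{K}\le\Knorm{(f_{p_{k+1}}-f_{p_k})\ast\phi_k}{K_{k+1}}\,\|\sigma_k\|_1<2^{-k}$, valid once $k+1\ge j_0$ since then $d(K,K_{k+1}^\complement)\ge\eta>s(\sigma_k)$ and hence $K\subseteq K_{k+1}^{-s(\sigma_k)}$. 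Summing the geometric tail shows $(f_{p_n}\ast\delta_m)_n$ is uniformly Cauchy, hence uniformly convergent, on $K$; as this holds for every $m$, $(f_{p_n})$ is fundamental.

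I expect the last paragraph to be the main obstacle. The delicate point is that Lemma \ref{normsplit} only controls $(f_{p_{k+1}}-f_{p_k})\ast\Phi_M$ on the shrunken set $K_{k+1}^{-s(\sigma_k)}$, so one must guarantee that $K$ sits inside this set with a definite margin exceeding $s(\sigma_k)$. This is exactly where the smallness of the tail $\sum_{i\ge N}\vare_i$ (arranged in the construction) and the fact that the exhaustion $(K_j)$ eventually contains $K$ in its interior are used together; shifting the delta sequence by $N$ is what keeps all supports below $d(K,U^\complement)$ so that nothing is left undefined on $K$, reconciling the per-term estimates with the geometric summation.
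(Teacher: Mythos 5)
Your proposal is correct and follows essentially the same route as the paper: extract the fast subsequence via the functionals $P_j$ and Theorem \ref{equiv}, then prove fundamentality by telescoping $(f_{p_n}-f_{p_{n'}})\ast\delta_m$ through the tail infinite convolutions of the $\phi_k$ and applying Lemma \ref{normsplit} with the bound (4). Your bookkeeping is in places slightly more careful than the paper's (choosing $\vare_n$ after $\phi_n$ so that (2) is automatic, and the explicit subsequence argument showing $P_j(f_m-f_n)\to 0$), but these are refinements of the same argument rather than a different approach.
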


\begin{proof}
Let $(\vare_j)$ be a sequence such that $\vare_j<d(K_{j},U^\complement)$ for every $j$.  By $(iii)$ and $(iv)$ above, $\sum_{i=1}^\infty\vare_i<\sum_{i=1}^{\infty}\frac{1}{2^i}<\infty$. Thus the sequence $(\vare_j)$ meets the first condition. Since $(f_n)$ is $\Delta$-Cauchy, we have from Theorem \ref{equiv} that for every $k$, $P_k(f_n-f_m)\to 0$ as $m,n\to\infty$. Thus, for $\vare_1$ there exists $p_1\in\nats$ such that for every $m,n\geq p_1$, $P_1(f_n-f_m)<\vare_1$.  This implies that there is a function $\phi_1\in\mathscr{D}_o$ with $s(\phi_1)<\vare_1$ and $\|(f_n-f_m)\ast\phi_1\|_{K_2}< \vare_1<\frac{1}{2}$ for every $m,n\geq p_1$.   As we have $P_k(f_n-f_m)\to 0$ for every $k$, we can find $p_k\in\nats$ in a similar fashion for every $k$.  That is, we can find a $p_k>p_{k-1}$ such that there is a $\phi_k\in\mathscr{D}_o$ with $s(\phi_k)<\vare_{k}$ and $\|(f_n-f_m)\ast\phi_k\|_{K_{k}}\leq\vare_{k}$ for every $m,n\geq p_k$.  By this process, for every $n$ we select $\phi_n$ such that $s(\phi_n)<\vare_{n}$, meeting the second condition.  For the third, note that for any $n$, $f_{p_n+1}-f_{p_n}$ is a continuous function defined on $U$  and $s(\phi_n)<\vare_{n}<d(K_{n},U^\complement)$, so $(f_{p_{n+1}}-f_{p_n})\ast\phi_n$ is defined on $K_{n}$ and is continuous. Note that $\vare_k<\frac{\vare_{k-1}}{2}$ and $\vare_1<\frac{1}{2}$, so $\vare_k<\frac{1}{2^k}$. Thus, since $\|(f_n-f_m)\ast\phi_k\|_{K_k}\leq\vare_{k}$ for every $m,n\geq p_k$, we have $\|(f_{p_{n+1}}-f_{p_n})\ast\phi_n\|_{K_n}<\frac{1}{2^{n}}$ for each $n$. Note that the sequence $(\phi_n)$ as defined above is a delta sequence since $s(\phi_n)<\vare_{n}\to 0$.

Now we show that the sequence $(f_{p_n})$ is fundamental.
Let $K\Subset U$.  Then $K\subseteq K_j$ for some  $j$.  Let $\delta_n=\phi_n\ast\phi_{n+1}\ast\ldots$ and, for $l>n$,
$$
\psi_n^{l}=\phi_n\ast\phi_{n+1}\ast\ldots\ast\phi_{l-1}\ast\phi_{l+1}\ast\ldots.
$$
From \cite{open} we know that $(\delta_n),(\psi_n^l)\in\Delta$, and $s(\psi_n^l)<s(\delta_n)<\frac{1}{2^{n-1}}$ for any $l$ (since $s(\phi_n)<\frac{1}{2^{n}}$ and $s(\delta_n)\leq\sum_{i=n}^\infty\phi_i$).  We claim that for every fixed $k$, $(f_{p_n}\ast\delta_k)$ converges uniformly on $K_j$ as $n\to\infty$.  As $(f_{p_n}\ast\delta_k)$ is a sequence of continuous functions on $K_j$ for large enough $k$, this is equivalent to showing that $(f_{p_n}\ast\delta_k)$ is Cauchy on $K_j$ with respect to uniform convergence.  This is done as follows, where we assume, without loss of generality that $j<k<m<n$ and $k$ is large enough so that $(f_{p_n}-f_{p_m})\ast\delta_k$ is defined on $K_j$:
\begin{align*}
\|(f_{p_n}-f_{p_m})\ast\delta_k\|_{K_j}&=\left\|\sum_{i=m+1}^n(f_{p_i}-f_{p_{i-1}})\ast\delta_k\right\|_{K_j}\\
&\leq \sum_{i=m+1}^n\left\|(f_{p_i}-f_{p_{i-1}})\ast\delta_k\right\|_{K_j}\\
&=\sum_{i=m+1}^n\left\|(f_{p_i}-f_{p_{i-1}})\ast\phi_{i-1}\ast\psi_k^{i-1}\right\|_{K_j}\\
&\leq\sum_{i=m+1}^n\left\|(f_{p_i}-f_{p_{i-1}})\ast\phi_{i-1}\right\|_{K_{i-1}}\left\|\psi_k^{i-1}\right\|_1\\
&=\sum_{i=m+1}^n\left\|(f_{p_i}-f_{p_{i-1}})\ast\phi_{i-1}\right\|_{K_{i-1}}\\
& < \sum_{i=m+1}^n \frac1{2^{i-1}} \to 0.
\end{align*}
\end{proof}

\begin{cor}\label{injective}
For each $[(g_n)]_\Delta\in\oboehms{U}$, there is a unique $[(f_n)]_\delta\in\boehms{U}$ such that $[(f_n)]_\delta\subseteq[(g_n)]_\Delta$.
\end{cor}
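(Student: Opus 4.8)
The plan is to establish existence and uniqueness separately, reducing the uniqueness half to the single assertion that a \emph{fundamental} sequence which is $\Delta$-convergent to zero is already $\delta$-convergent to zero.

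For existence, I would start from any representative $\Delta$-Cauchy sequence $(g_n)$ of the given class $[(g_n)]_\Delta$. By Lemma \ref{seqconstr} there is a subsequence $(g_{p_n})$ that is fundamental, so $[(g_{p_n})]_\delta\in\boehms{U}$ is defined. Since $(g_n)$ is $\Delta$-Cauchy, applying the definition to the two increasing index sequences $(p_n)$ and $(n)$ gives $g_{p_n}-g_n\Dto 0$; hence $(g_{p_n})$ lies in the same $\Delta$-class as $(g_n)$, i.e.\ $[(g_{p_n})]_\Delta=[(g_n)]_\Delta$. Corollary \ref{surjective} then yields $[(g_{p_n})]_\delta\subseteq[(g_{p_n})]_\Delta=[(g_n)]_\Delta$, which is exactly the required containment with $(f_n)=(g_{p_n})$.

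For uniqueness, suppose $[(f_n)]_\delta$ and $[(h_n)]_\delta$ are both contained in $[(g_n)]_\Delta$. Then $(f_n)$ and $(h_n)$ are each $\Delta$-equivalent to $(g_n)$, so by linearity of $\Delta$-convergence $f_n-h_n\Dto 0$. Writing $d_n=f_n-h_n$, the sequence $(d_n)$ is fundamental: given $K\Subset U$, a common delta sequence for $(f_n)$ and $(h_n)$ is produced by convolving their respective delta sequences, and Corollary \ref{cc} shows that $(d_n\ast\delta_k)$ then converges uniformly for each $k$. It therefore suffices to prove that if $(d_n)$ is fundamental and $d_n\Dto 0$, then $d_n\dto 0$; this gives $(f_n)\sim(h_n)$ and hence $[(f_n)]_\delta=[(h_n)]_\delta$.

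To prove this key implication, which I expect to be the main obstacle, I would fix $K'\Subset U$ and choose a slightly larger $K\Subset U$. Fundamentality supplies $(\delta_k)\in\Delta$ with $d_n\ast\delta_k\toto c_k$ on $K$ as $n\to\infty$ for each $k$, and the goal is to force every limit $c_k$ to vanish on $K'$. The hypothesis $d_n\Dto 0$ supplies $(\theta_n)\in\Delta$ with $d_n\ast\theta_n\toto 0$ on $K$. Fixing $m$ and using commutativity of convolution, I would compare the two expressions for $d_n\ast\delta_m\ast\theta_n$: on one hand $(d_n\ast\theta_n)\ast\delta_m\toto 0$ by Corollary \ref{cc}; on the other hand, since $d_n\ast\delta_m\toto c_m$, Corollary \ref{seqconv} gives $d_n\ast\delta_m-d_n\ast\delta_m\ast\theta_n\toto 0$ on a slightly smaller set. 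Combining the two forces $d_n\ast\delta_m\toto 0$, so $c_m=0$ on $K'$ and $\Knorm{d_n\ast\delta_m}{K'}\to 0$ for each $m$. As $K'$ was arbitrary, this is precisely $d_n\dto 0$. The only care needed is the routine bookkeeping of the slightly shrunk domains $K^{-\vare}$, which is absorbed by taking $K$ large enough relative to $K'$.
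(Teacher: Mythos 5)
Your proposal is correct, and its existence half matches the paper's: extract a fundamental subsequence via Lemma \ref{seqconstr} and observe (using the $\Delta$-Cauchy property and Lemma \ref{dD}) that it stays in the same $\Delta$-class. The uniqueness half, however, takes a genuinely different route. The paper applies Lemma \ref{subseq} to $f_n-h_n\Dto 0$ to extract subsequences with $f_{q_n}-h_{q_n}\dto 0$, and then uses the fact that a fundamental sequence is $\delta$-Cauchy to transfer the conclusion from the subsequences back to the full sequences ($f_{q_n}-f_n\dto 0$ and $h_{q_n}-h_n\dto 0$). You instead prove outright that a fundamental sequence $(d_n)$ with $d_n\Dto 0$ already satisfies $d_n\dto 0$, by comparing the two readings of $d_n\ast\delta_m\ast\theta_n$: Corollary \ref{cc} kills $(d_n\ast\theta_n)\ast\delta_m$, while Corollary \ref{seqconv} shows $d_n\ast\delta_m-(d_n\ast\delta_m)\ast\theta_n\toto 0$, forcing each limit $c_m$ to vanish. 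Your argument is sound (the domain shrinkage is exactly the routine $K^{-\vare}$ bookkeeping you flag, handled by choosing $K$ with $\overline{(K')^{+\vare}}\subseteq K\Subset U$ and discarding finitely many indices), and it buys self-containedness: it relies only on Corollaries \ref{cc} and \ref{seqconv} proved in this paper, rather than on the subsequence-extraction Lemma \ref{subseq} imported from the earlier work, and it isolates a reusable fact --- on fundamental sequences, $\Delta$-convergence to zero coincides with $\delta$-convergence to zero --- that the paper only obtains implicitly. The paper's route is shorter given that Lemma \ref{subseq} is already available as a quoted result.
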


\begin{proof}
By Lemma \ref{seqconstr}, for every $(f_n)\in [(g_n)]_\Delta$, there is a subsequence $(f_{p_n})$ such that $(f_{p_n})$ is fundamental. By Lemma \ref{dD}, $[(f_{p_n})]_\delta\subset[(g_n)]_\Delta$. To show uniqueness, suppose $[(f_n)]_\delta,[(h_n)]_\delta\subseteq [(g_n)]_\Delta$. Then $(f_n-h_n)\Dto 0$ and there are subsequences $(f_{q_n})$ and $(h_{q_n})$ of $(f_n)$ and $(g_n)$, respectively, such that $(f_{q_n}-h_{q_n})\dto 0$.  But then $(h_{q_n})\in [(f_n)]_\delta$. Now, since $(f_{q_n}-f_n)\dto 0$ and $(h_{q_n}-h_n)\dto 0$, we have $(h_n)\in[(f_n)]_\delta$. Hence $[(f_n)]_\delta=[(h_n)]_\delta$.
\end{proof}

Corollaries \ref{surjective} and \ref{injective} imply that there is a one-to-one correspondence between equivalence classes of fundamental sequences and equivalence classes of $\Delta$-Cauchy sequences.

\end{document}